\begin{document}
\title{Finite difference method for nonlinear damped viscoelastic Euler-Bernoulli beam model
}
%\subtitle{Do you have a subtitle?\\ If so, write it here}

%\titlerunning{Short form of title}        % if too long for running head

\author{Wenlin Qiu \and Xiangcheng Zheng \and Tao Guo \and Xu Xiao
}

%\authorrunning{Short form of author list} % if too long for running head

\institute{W. Qiu \at
             School of Mathematics, Shandong University, Jinan, Shandong 250100, P. R. China \\
              \email{wlqiu@sdu.edu.cn}           %  \\
%             \emph{Present address:} of F. Author  %  if needed
           \and
           X. Zheng (Corresponding author) \at
             School of Mathematics, State Key Laboratory of Cryptography and Digital Economy Security, Shandong University, Jinan, 250100, P. R. China \\
              \email{xzheng@sdu.edu.cn}
           \and
           T. Guo \at
           School of Mathematics and Statistics, Hunan Normal University, Changsha, Hunan 410081, P. R. China \\
            \email{guotao6613@163.com}
           \and
           X. Xiao \at
              School of Mathematics and Statistics, Guangxi Normal University, Guilin,  Guangxi 541004, P. R. China \\
              \email{xiaoxu961004@gmail.com}
}

\date{Received: date / Accepted: date}
% The correct dates will be entered by the editor

\maketitle

\begin{abstract}
We propose and analyze the numerical approximation for a viscoelastic Euler-Bernoulli beam model containing a nonlinear strong damping coefficient. The finite difference method is used for spatial discretization, while the backward Euler method and the averaged PI rule are applied for temporal discretization. The long-time stability and the finite-time error estimate of the numerical solutions are derived for both the semi-discrete-in-space scheme and the fully-discrete scheme. Furthermore, the Leray-Schauder theorem is used to derive the existence and uniqueness of the fully-discrete numerical solutions. Finally, the numerical results verify the theoretical analysis.

\keywords{viscoelastic Euler-Bernoulli beam, nonlinear damping, long-time stability, error estimate}

\subclass{35L75 \and 65M15 \and 65M22 \and 45K05 \and 45E10}
\end{abstract}

\section{Introduction}
This article proposes a finite difference method for solving a nonlinear damped viscoelastic Euler-Bernoulli beam model \cite{Cavalcanti1}
   \begin{equation}\label{eq1.1}
   \begin{split}
        u_{tt}(x, t) + q(t)u_t(x, t) + u_{xxxx}(x, t) - \int_0^{t}\beta(t-s)u_{xxxx}(x,s)ds = f(x, t),
   \end{split}
   \end{equation}
for $(x,t)\in (0,1) \times (0,T]$ with $T>0$ being either finite or infinite that will be specified in different cases, including a nonlinear strong damping coefficient (cf. \cite{Emm} and \cite[Section 6]{Cannarsa})
\begin{equation}\label{eq1.2}
     \begin{split}
          q(t) = G\left(  \int_{0}^{1}|u_{xx}(x,t)|^2dx \right), \quad t\geq 0,
     \end{split}
     \end{equation}
subject to initial values
     \begin{equation}\label{eq1.3}
     \begin{split}
          u(x,0)=u_0(x),  \quad  u_t(x,0)=u_1(x),  \quad  x\in (0,1),
     \end{split}
     \end{equation}
and hinged boundary conditions
     \begin{equation}\label{eq1.4}
     \begin{split}
         u(0,t)= u(1,t) =  u_{xx}(0,t)= u_{xx}(1,t) =0,  \quad  t\in [0,T],
     \end{split}
     \end{equation}
 where $G(v)$ in \eqref{eq1.2} is a function between $\mathbb{R}^+$, $u_0$, $u_1$ and $f$ are given functions. Furthermore, $\beta(t)\in L^1(0,\infty)$ is either an oscillatory kernel  \cite{Bru,Cannarsa1,Cannarsa}
     \begin{equation}\label{eq1.5}
     \begin{split}
          \beta(t) = \frac{e^{-\sigma t}t^{\alpha-1}\cos(\gamma t)}{\Gamma(\alpha)}, \quad \sigma> 1, \quad 0\leq \gamma \leq \sigma, \quad  \alpha = \frac{1}{2}, 1,
     \end{split}
     \end{equation}
where the power function $t^{\alpha-1}$ describes the viscoelastic behavior \cite{Lopez}, the exponential factor $e^{-\sigma t}$ has a tempering effect on the power law \cite{Sabzikar} and the trigonometric function $\cos(\gamma t)$ describes the oscillatory feature in time \cite{ZH}, or a non-oscillatory kernel that is commonly used in tempered fractional or nonlocal problems \cite{DenLiTia,LiDenZha,Sabzikar}
\begin{equation}\label{eq1.6}
     \begin{split}
          \beta(t) = \frac{e^{-\sigma t}t^{\alpha-1}}{\Gamma(\alpha)}, \quad \sigma> 1, \quad 0< \alpha \leq 1.
     \end{split}
     \end{equation}
Here $\Gamma(\cdot)$ denotes the standard Gamma function.

Viscoelastic Euler-Bernoulli beam equation is used to describe the mechanical behavior of beams with viscoelastic effects under bending and vibration conditions, with extensive applications in several circumstances \cite{Findley,Timoshenko}.
%Since the stress-strain relationship of viscoelastic materials is time-dependent, this type of equation is very suitable for application in the following fields:
 For instance,
 %when calculating the deflection and deformation of bridges, the viscoelastic effect could explain the increase in deformation due to the load history. In addition,
 viscoelastic materials are widely used in shock-absorbing devices in building structures, which can effectively absorb the vibration energy caused by earthquakes. The Euler-Bernoulli type viscoelastic beam equation is used to analyze the dynamic response of these materials under the action of seismic forces, helping to optimize the design of buildings and improve their earthquake resistance \cite{Christensen,Timoshenko}. %Furthermore, for problem \eqref{eq1.1} the core role of the nonlinear damping term e.g., $q(t)u_t$ is to control the vibration amplitude of the beam by adaptively and nonlinearly enhancing the damping effect, ensuring the stability and energy dissipation efficiency of the system. For viscoelastic beams, this term is particularly important because it further supplements the dissipation mechanism of the viscoelastic material itself, especially under large vibrations or complex dynamic conditions, helping to quickly reduce system vibrations and improve overall long-term stability \cite{Lagnese,Renardy}.

For the nonlinear damped Euler-Bernoulli equation in the form like \eqref{eq1.1}, some scholars have conducted relevant research on the theoretical analysis.
Cavalcanti et al. \cite{Cavalcanti1} discussed and proved the existence of global solutions for \eqref{eq1.1} and decay rates of the energy. Jorge and Ma \cite{Jorge} derived the well-posedness and asymptotic stability of the solutions with perturbations of $p$-Laplacian type. Yang \cite{Yang} derived the exponential decay results of the energy based on an appropriate Lyapunov function. Conti and Geredeli \cite{Conti} proved the existence of smooth global attractors. Araujo et al. \cite{Araujo} considered the variational inequality for the plate equation with a terminal memory term of $p$-Laplacian.

Despite significant progresses on theoretical investigations, the corresponding numerical studies for hyperbolic integro-differential equations with nonlinear coefficients are far from well developed. There exist substantial numerical analysis works on linear versions of (\ref{eq1.1}) \cite{Allegretto,Cannon,Fairweather,Karaa,Larsson,Pani,Saedpanah,Xu2,XuD4}. For nonlinear problems, Yanik and Fairweather \cite{Yanik} proposed a discrete-time collocation approximation for a hyperbolic integro-differential equation with a nonlinear diffusivity coefficient depending on $u$. Tan et al. \cite{Tan} recently analyzed a fully-discrete two-grid finite element method for a hyperbolic integro-differential equation with a nonlinear coefficient depending on $u$. Qiu et al. \cite{QiuZhe} considered a fully-discrete finite element method for a hyperbolic integro-differential equation with the viscous nonlinear-nonlocal damping. For model (\ref{eq1.1}), the numerical analysis remains untreated due to the difficulties caused by, e.g.,  the nonlinear strong damping coefficient.

Motivated by aforementioned discussions, we conduct numerical approximation and analysis for model (\ref{eq1.1}). We use the finite difference method to obtain a semi-discrete-in-space scheme and derive the long-time stability based on a transformation of the kernel, and then prove its convergence by using technical splittings (e.g., the equation \eqref{eq2.34}) to accommodate the difficulties caused by the nonlinear strong damping. We then apply the backward Euler method and averaged PI rule to establish the fully-discrete difference scheme. By energy argument, we prove the long-time stability of the fully-discrete scheme with the help of some transformations of summation terms (e.g., the equation \eqref{eq3.12}), following which we give the convergence analysis. Finally, we apply the Leray-Schauder theorem to derive the existence and uniqueness of fully-discrete numerical solutions.

The rest of the paper is organized as follows. In Section \ref{sec2}, we formulate and analyze a spatial semi-discrete difference scheme. Then we further establish the fully-discrete difference scheme and deduce some theoretical results in Sections \ref{sec3} and \ref{sec4}, respectively. Section \ref{sec5} is devoted to validating the theoretical analysis by several numerical examples. Finally, some concluding remarks are presented in Section \ref{sec6}.

\section{Spatial semi-discrete scheme}\label{sec2}

In this section, we shall formulate and analyze a spatial semi-discrete difference scheme for problem \eqref{eq1.1}-\eqref{eq1.4}.

\subsection{Construction of spatial semi-discrete scheme}
First, we introduce some notations for further analysis. Given a mesh $x_j=jh$, $j=0,1,\cdots, J$ with the spatial step $h=1/J$, and $J$ is a positive integer. In the subsequent analysis, we shall denote that $U_0(t)=U_J(t)=0$ for $t\in (0,T]$. Next, introduce some difference-quotient notations as follows
\begin{equation*}%\label{eq2.1}
     \begin{split}
        & u_j(t) = u(x_j, t), \quad U_j(t) \simeq u(x_j, t), \quad
        (U_j(t))_{x} = \frac{U_{j+1}(t)-U_j(t)}{h}, \\
        & (U_j(t))_{\Bar{x}} = \frac{U_{j}(t)-U_{j-1}(t)}{h}, \quad  (U_j(t))_{x\Bar{x}} = \frac{(U_j(t))_{x} - (U_j(t))_{\Bar{x}}}{h}, \\
        & (U_j(t))_{xx\Bar{x}\Bar{x}}
        = \frac{1}{h^4}\left[ U_{j+2}(t) - 4U_{j+1}(t)  + 6U_{j}(t)  - 4U_{j-1}(t) + U_{j-2}(t)   \right].
     \end{split}
     \end{equation*}
Let the notations $\Vec{V}=(V_1, V_2, \cdots, V_{J-1})^{\top}$ and $\Vec{W}=(W_1, W_2, \cdots, W_{J-1})^{\top}$ be the real vectors. Then, denote the following discrete $L^2$ inner product, discrete $L^2$ norm and discrete $L^{\infty}$ norm
\begin{equation}\label{eq2.2}
     \begin{split}
         \langle \Vec{V}, \Vec{W} \rangle
         = h \sum\limits_{j=1}^{J-1} V_j W_j, \quad \|\Vec{W}\| = \sqrt{\langle \Vec{W}, \Vec{W} \rangle}, \quad \|\Vec{W}\|_{\infty} = \max\limits_{1\leq j \leq J-1}\left| W_j\right|.
     \end{split}
     \end{equation}
Then, the following relation holds (see \cite[(2.9)]{Lopez})
\begin{equation*}
     \begin{split}
         \langle \Vec{V}, (\Vec{W})_{x\Bar{x}} \rangle
         = - h \sum\limits_{j=1}^{J-1} (V_j)_x (W_j)_x,
     \end{split}
     \end{equation*}
and we further have (see \cite[Lemma 3.2]{Hu})
\begin{equation}\label{eq2.4}
     \begin{split}
         \langle \Vec{W}, (\Vec{W})_{xx\Bar{x}\Bar{x}} \rangle
         = \langle (\Vec{W})_{x\Bar{x}}, (\Vec{W})_{x\Bar{x}} \rangle,
     \end{split}
     \end{equation}
for $W_{-1}= -W_{1}$ and $W_{J+1}=-W_{J-1}$.

Based on the above difference operators and some notations, we replace $u_j(t)$ with its numerical approximation $U_j(t)$ and obtain the following spatial semi-discrete difference scheme with $f_j(t)=f(x_j,t)$, that is
\begin{equation}\label{eq2.5}
   \begin{split}
        U_j''(t) + G\left( \|\Vec{U}_{x\Bar{x}}(t)\|^2 \right) U_j'(t) + (U_{j})_{xx\Bar{x}\Bar{x}}(t) - \int_0^{t}\beta(t-s)(U_{j})_{xx\Bar{x}\Bar{x}}(s)ds = f_j(t),
   \end{split}
   \end{equation}
\begin{equation}\label{eq2.6}
   \begin{split}
        & U_0(t) = U_{J}(t) = 0, \quad 0< t \leq T, \\
        & U_{-1}(t) = -U_{1}(t), \quad U_{J+1}(t) = -U_{J-1}(t), \quad 0< t \leq T,
   \end{split}
   \end{equation}
\begin{equation}\label{eq2.7}
   \begin{split}
        U_j(0) = u_0(x_j), \quad U_j'(0) = u_1(x_j), \quad j = 1,2,\cdots, J-1.
   \end{split}
   \end{equation}

\subsection{Long-time stability}

We first refer the following lemma to support subsequent analysis, cf. \cite{Cannarsa1} and \cite[Lemma 1.1]{Zhao}.
\begin{lemma}\label{lemma1.1} Let $\beta(t)$ be given in \eqref{eq1.5} or \eqref{eq1.6}. Then, the kernel $K(t)=\int_t^{\infty}\beta(s)ds$ is of positive type, such that $K(\infty)=0$ and $K(0):=K_0<1$.
 \end{lemma}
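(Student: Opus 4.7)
The statement splits into three claims. The limit $K(\infty)=0$ is immediate from $\beta\in L^1(0,\infty)$, since $K(t)=\int_t^{\infty}\beta(s)\,ds$ is the tail of an integrable function. To bound $K(0)=\int_0^{\infty}\beta(s)\,ds<1$ I would evaluate the integral in closed form. For the non-oscillatory kernel \eqref{eq1.6}, the standard Gamma integral gives $K(0)=\sigma^{-\alpha}<1$ since $\sigma>1$ and $\alpha\in(0,1]$. For the oscillatory kernel \eqref{eq1.5}, writing $\cos(\gamma s)=\operatorname{Re} e^{i\gamma s}$ yields $K(0)=\operatorname{Re}(\sigma-i\gamma)^{-\alpha}=\rho^{-\alpha}\cos(\alpha\theta)$, where $\rho=\sqrt{\sigma^2+\gamma^2}\ge\sigma$ and $\theta=\arctan(\gamma/\sigma)\in[0,\pi/4]$, so that $K(0)\le\sigma^{-\alpha}<1$.

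For the positive-type property in case \eqref{eq1.6}, the cleanest route is the classical criterion that any nonnegative, nonincreasing, convex, integrable function on $(0,\infty)$ is of positive type. Here $K>0$ and $K'=-\beta<0$; moreover the derivative $\beta'(t)=e^{-\sigma t}\Gamma(\alpha)^{-1}[(\alpha-1)t^{\alpha-2}-\sigma t^{\alpha-1}]$ is negative for $\alpha\in(0,1]$, so $K''=-\beta'>0$ and $K$ is convex; the criterion then applies.

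The delicate case is the oscillatory kernel \eqref{eq1.5}, where $K$ is neither monotonic nor convex, and this is the step I expect to require the most care. My plan is to work in Fourier space via the equivalent characterization that $K\in L^1(0,\infty)$ is of positive type iff $\operatorname{Re}\widehat K(i\omega)\ge 0$ for every $\omega\in\mathbb R$. From $\widehat K(p)=p^{-1}(\widehat\beta(0)-\widehat\beta(p))$ (by Fubini) a short calculation gives $\operatorname{Re}\widehat K(i\omega)=\omega^{-1}\int_0^{\infty}\sin(\omega s)\beta(s)\,ds$. Expanding $\sin(\omega s)\cos(\gamma s)$ by the product-to-sum identity and evaluating each summand via the standard formula $\int_0^{\infty}e^{-\sigma s}s^{\alpha-1}\sin(\eta s)/\Gamma(\alpha)\,ds=(\sigma^2+\eta^2)^{-\alpha/2}\sin(\alpha\arctan(\eta/\sigma))$ reduces the positive-type condition to
\[
r_+^{-\alpha}\sin(\alpha\phi_+)+r_-^{-\alpha}\sin(\alpha\phi_-)\ge 0,
\]
with $r_\pm=\sqrt{\sigma^2+(\omega\pm\gamma)^2}$ and $\phi_\pm=\arctan((\omega\pm\gamma)/\sigma)$.

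When $|\omega|\ge\gamma$ both summands carry the sign of $\omega$ and the inequality is immediate. The only nontrivial regime is $|\omega|<\gamma$, where the two summands have opposite signs; here the hypotheses $0\le\gamma\le\sigma$ and $\alpha\in\{1/2,1\}$ are genuinely needed. For $\alpha=1$, after clearing denominators, the inequality collapses to $(a-b)(\sigma^2-ab)\ge 0$ with $a=|\omega|+\gamma$ and $b=\gamma-|\omega|$, which follows from $ab=\gamma^2-\omega^2\le\sigma^2$; for $\alpha=1/2$, the half-angle identity $\sin^2(\phi/2)=(1-\cos\phi)/2$ reduces the claim to an analogous elementary algebraic inequality controlled by the same bound. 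This trigonometric bookkeeping is the technical heart of the lemma and is essentially the computation carried out in \cite{Cannarsa1} and \cite[Lemma~1.1]{Zhao} cited in the statement.
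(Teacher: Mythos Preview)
Your argument is correct. The paper does not actually prove this lemma: it is stated with the remark ``cf.\ \cite{Cannarsa1} and \cite[Lemma~1.1]{Zhao}'' and used as a black box thereafter, so there is nothing to compare against beyond those references. Your Fourier-side route for the oscillatory kernel and the convexity criterion for the non-oscillatory one are exactly the approach of those cited works, as you note yourself in the final sentence.

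The one place I would ask you to be less telegraphic is the $\alpha=\tfrac12$ subcase of \eqref{eq1.5}. After the half-angle identity the inequality to check becomes
\[
\frac{r_+-\sigma}{r_+^{2}}\ \ge\ \frac{r_--\sigma}{r_-^{2}},
\]
which factors as $(r_+-r_-)\bigl[\sigma(r_++r_-)-r_+r_-\bigr]\ge 0$; since $r_+>r_-$ one needs $\sigma(r_++r_-)\ge r_+r_-$, and this follows (after squaring) from $r_\pm>\sigma$ together with $ab=\gamma^2-\omega^2\le\sigma^2$. Spelling this out would make the sketch self-contained rather than leaving ``analogous elementary algebraic inequality'' to the reader.
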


Then we make the following assumptions on $G(v)$ to perform analysis:

($\mathbf{S1}$) There exist positive constants $g_0$ and $g_1$ such that for $0\leq v \leq C$, $G(v)\leq g_1$, and for $v \geq 0$, $G(v)\geq g_0$; \par
($\mathbf{S2}$) $G(v)$ is continuously differentiable function with $0\leq G'(v)\leq L$ for $v\geq 0$, where $L$ is the Lipschitz constant.

\begin{remark}
 The ($\mathbf{S2}$) implies that $G(v)$ is Lipschitz continuous, i.e., for any $v_1,v_2> 0$, it holds that $|G(v_1)-G(v_2)|\leq L|v_1-v_2|$. For instance, $G(v)=1+v$ or $G(v)=\sqrt{1+v}$ satisfies the assumptions ($\mathbf{S1}$)-($\mathbf{S2}$).
\end{remark}

\vskip 1mm
Below, we shall establish the long-time stability for the semi-discrete difference scheme \eqref{eq2.5}-\eqref{eq2.7}.

Denote the notation $\Vec{U}(t)=[U_1(t), U_2(t), \cdots, U_{J-1}(t)]^{\top}$, and assume that $f(\cdot,t)\in L^1(0,\infty)$, that is
\begin{equation}\label{eq2.8}
   \begin{split}
        \|\Vec{f}\|_1 : = \int_0^{\infty} \|\Vec{f}(t)\|dt < \infty, \quad \Vec{f}(t)=[f_1(t), f_2(t), \cdots, f_{J-1}(t)]^{\top},
   \end{split}
   \end{equation}
then the following long-time stability ($T\rightarrow \infty$) holds.

\begin{theorem} \label{theorem2.1}
Let the assumption ($\mathbf{S1}$) hold. Suppose that $\beta(t)$ is denoted by \eqref{eq1.5} or \eqref{eq1.6}, $\Vec{f}(t)$ satisfies \eqref{eq2.8} and let $\Vec{U}(t)$ be the solution of the semi-discrete scheme \eqref{eq2.5}-\eqref{eq2.7}. If $u_0(x)\in C^4([0,1])$, then for any $0<t\leq T\leq \infty$, $\mu_0=1-K_0$ and $C_0=\max\limits_{0\leq t\leq \infty} K(t)$, it holds that
\begin{equation*}
 \begin{split}
     \frac{1}{2}\|\Vec{U}'(t)\|^2
       & + g_0\int_0^{t} \|\Vec{U}'(t)\|^2dt
      +  \frac{\mu_0}{4}\|\Vec{U}_{x\Bar{x}}(t)\|^2 \\
      & \leq e^{\mu_0/8}\left[  \|\Vec{U}'(0)\|^2 + \left(1+2C_0+\frac{2C_0^2}{\mu_0}\right)\|\Vec{U}_{x\Bar{x}}(0)\|^2 + 2\|\Vec{f}\|_1^2\right],
 \end{split}
\end{equation*}
where $\Vec{U}(0)=[u_0(x_1), \cdots, u_0(x_{J-1})]^{\top}$ and $\Vec{U}'(0)=[u_1(x_1), \cdots, u_1(x_{J-1})]^{\top}$.
\end{theorem}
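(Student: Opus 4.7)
The plan is to carry out a time-energy estimate by taking the discrete $L^2$ inner product of \eqref{eq2.5} with $\Vec{U}'(t)$. The identities $\langle \Vec{U}''(t),\Vec{U}'(t)\rangle=\tfrac12\tfrac{d}{dt}\|\Vec{U}'(t)\|^2$ and, via \eqref{eq2.4}, $\langle \Vec{U}_{xx\bar{x}\bar{x}}(t),\Vec{U}'(t)\rangle=\tfrac12\tfrac{d}{dt}\|\Vec{U}_{x\bar{x}}(t)\|^2$, combined with the lower bound $G(\|\Vec{U}_{x\bar{x}}\|^2)\ge g_0$ from ($\mathbf{S1}$), reduce the equation (after integrating on $(0,t)$) to an identity of the form
\begin{equation*}
\tfrac12\|\Vec{U}'(t)\|^2 + g_0\int_0^t\|\Vec{U}'(\tau)\|^2 d\tau + \tfrac12\|\Vec{U}_{x\bar{x}}(t)\|^2 = E_0 + \mathcal{I}(t) + \int_0^t\langle\Vec{f}(\tau),\Vec{U}'(\tau)\rangle d\tau,
\end{equation*}
where $E_0$ collects the initial data and $\mathcal{I}(t)$ is the double time integral arising from the memory kernel.

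The crux is the treatment of $\mathcal{I}(t)$. Writing $\phi(\tau):=\Vec{U}_{x\bar{x}}(\tau)$ and using $\beta=-K'$ (from Lemma~\ref{lemma1.1}), integration by parts in $s$ yields
\begin{equation*}
\int_0^\tau\beta(\tau-s)\phi(s) ds = K_0 \phi(\tau) - K(\tau)\phi(0) - \int_0^\tau K(\tau-s)\phi'(s) ds.
\end{equation*}
Pairing with $\phi'(\tau)$ and integrating in $\tau$ produces three pieces: a boundary term $\tfrac{K_0}{2}(\|\phi(t)\|^2-\|\phi(0)\|^2)$, which combined with the $\tfrac12\|\phi(t)\|^2$ on the left yields the coercive contribution $\tfrac{\mu_0}{2}\|\phi(t)\|^2$; a double integral $\int_0^t\!\int_0^\tau K(\tau-s)\langle\phi'(s),\phi'(\tau)\rangle ds d\tau$, which is nonnegative by the positive-type property in Lemma~\ref{lemma1.1} and hence can be discarded on the favorable side; and a cross term $-\int_0^t K(\tau)\langle\phi(0),\phi'(\tau)\rangle d\tau$ involving the initial data. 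I would dispatch this last term by one more integration by parts in $\tau$ (converting $\phi'$ into $\phi$ and introducing an integrand $\beta(\tau)\langle\phi(0),\phi(\tau)\rangle$) together with Young's inequality calibrated by $\mu_0$ and $C_0=\max K$: this absorbs half of the previous coercivity, reducing $\tfrac{\mu_0}{2}\|\phi(t)\|^2$ to the advertised $\tfrac{\mu_0}{4}\|\phi(t)\|^2$ and producing the constant $1+2C_0+\tfrac{2C_0^2}{\mu_0}$ in front of $\|\phi(0)\|^2$.

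The forcing term is handled by Cauchy-Schwarz pointwise followed by pulling the supremum out of the time integral: $\int_0^t\langle\Vec{f},\Vec{U}'\rangle d\tau \le \sup_{0\le\tau\le t}\|\Vec{U}'(\tau)\|\cdot\|\Vec{f}\|_1$, and then Young's inequality absorbs a fraction of $\sup\|\Vec{U}'\|^2$ into the left-hand side after taking a supremum in $t$; this is what yields an estimate in $\|\Vec{f}\|_1$ rather than the usual $L^2$-in-time norm and accounts for the $2\|\Vec{f}\|_1^2$ term. Finally, the residual contribution $\int_0^t\beta(\tau)\|\phi(\tau)\|^2 d\tau$ left over from the second integration by parts is handled by Gronwall's inequality; because $\int_0^\infty\beta(\tau) d\tau = K_0 < 1$, the Gronwall multiplier can be chosen as the uniform factor $e^{\mu_0/8}$, which is precisely what makes the bound genuinely long-time stable as $T\to\infty$.

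The main obstacle will be the careful bookkeeping of the memory term: the two integrations by parts must be arranged so that (a) the positive-type double integral lands on the correct side to be discarded, (b) the initial-data cross term does not consume more than half of the available $\|\phi(t)\|^2$ coercivity, leaving the stated $\mu_0/4$ coefficient, and (c) the residual tail is small enough for a $t$-uniform Gronwall bound. Extracting the estimate in $\|\Vec{f}\|_1$ via the supremum trick and keeping all constants uniform as $T\to\infty$ is the other delicate point that distinguishes this long-time stability result from a short-time one.
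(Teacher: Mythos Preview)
Your proposal is correct and follows essentially the same approach as the paper: testing \eqref{eq2.5} against $\Vec{U}'(t)$, converting $\beta$ to $K$ via $\beta=-K'$ and integrating by parts in $s$, discarding the positive-type double integral, integrating by parts once more to trade the initial-data cross term for a $\beta$-weighted residual (handled by Young's inequalities calibrated with $\mu_0$ and $C_0$), treating the source via the supremum trick to get a bound in $\|\Vec{f}\|_1$, and closing with Gr\"onwall uniform in $T$ since $\int_0^\infty\beta<1$. The paper's proof implements exactly this sequence in \eqref{eq2.9}--\eqref{eq2.18}.
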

\begin{proof} By taking the inner product of \eqref{eq2.5} with $\Vec{U}'(t)$, and using the fact that $K_t(t)=-\beta(t)$, then we have
\begin{equation*}
   \begin{split}
        \langle \Vec{U}''(t), \Vec{U}'(t) \rangle & +  G\left( \|\Vec{U}_{x\Bar{x}}(t)\|^2 \right) \langle \Vec{U}'(t), \Vec{U}'(t) \rangle + \langle \Vec{U}_{xx\Bar{x}\Bar{x}}(t), \Vec{U}'(t) \rangle \\
        & + \int_0^{t}K_t(t-s) \langle\Vec{U}_{xx\Bar{x}\Bar{x}}(s),\Vec{U}'(t) \rangle ds = \langle \Vec{f}(t), \Vec{U}'(t) \rangle.
   \end{split}
   \end{equation*}
Utilizing the assumption $(\mathbf{S1})$ and \eqref{eq2.4}, we obtain
\begin{equation}\label{eq2.9}
   \begin{split}
      \frac{1}{2}\frac{d}{dt}\|\Vec{U}'(t)\|^2
      & + g_0 \|\Vec{U}'(t)\|^2
      +  \frac{1}{2}\frac{d}{dt}\|\Vec{U}_{x\Bar{x}}(t)\|^2 \\
        & + \int_0^{t}K_t(t-s) \langle\Vec{U}_{x\Bar{x}}(s),\Vec{U}_{x\Bar{x}}'(t) \rangle ds \leq  \| \Vec{f}(t)\| \|\Vec{U}'(t) \|.
   \end{split}
   \end{equation}
In the above formula, applying the integration by parts we see that
\begin{equation}\label{eq2.10}
   \begin{split}
       \int_0^{t}K_t(t-s) \Vec{U}_{x\Bar{x}}(s)ds
       = K(t) \Vec{U}_{x\Bar{x}}(0)
       -  K_0 \Vec{U}_{x\Bar{x}}(t)
       + \int_0^{t}K(t-s) \Vec{U}_{x\Bar{x}}'(s)ds.
   \end{split}
   \end{equation}
Then putting \eqref{eq2.10} into \eqref{eq2.9}, and integrating \eqref{eq2.9} regarding $t$ from 0 to $t_*$,
\begin{equation}\label{eq2.11}
   \begin{split}
       \frac{1}{2}&\|\Vec{U}'(t_{*})\|^2
       + g_0\int_0^{t_*} \|\Vec{U}'(t)\|^2dt
      +  \frac{\mu_0}{2}\|\Vec{U}_{x\Bar{x}}(t_*)\|^2 \\
      &+ \int_0^{t_*} K(t)\langle \Vec{U}'_{x\Bar{x}}(t), \Vec{U}_{x\Bar{x}}(0)\rangle dt + \int_0^{t_*}\int_0^{t} K(t-s)\langle \Vec{U}'_{x\Bar{x}}(s), \Vec{U}'_{x\Bar{x}}(t)\rangle ds dt \\
      & \leq \frac{1}{2} \|\Vec{U}'(0)\|^2 + \frac{1}{2}\|\Vec{U}_{x\Bar{x}}(0)\|^2   + \int_0^{t_*}\| \Vec{f}(t)\| \|\Vec{U}'(t) \|dt,
   \end{split}
   \end{equation}
where $\mu_0=1-K_0>0$, then using Lemma \ref{lemma1.1} to yield that
\begin{equation}\label{eq2.12}
   \begin{split}
       \int_0^{t_*}\int_0^{t} K(t-s)\langle \Vec{U}'_{x\Bar{x}}(s), \Vec{U}'_{x\Bar{x}}(t)\rangle ds dt \geq 0,
   \end{split}
   \end{equation}
and we apply the integration by parts again,
\begin{equation}\label{eq2.13}
   \begin{split}
       \int_0^{t_*} K(t)\ \Vec{U}'_{x\Bar{x}}(t) dt
        = K(t_*)\Vec{U}_{x\Bar{x}}(t_*) - K_0 \Vec{U}_{x\Bar{x}}(0)
        + \int_0^{t_*} \beta(t)\Vec{U}_{x\Bar{x}}(t) dt.
   \end{split}
   \end{equation}
Therefore, substituting \eqref{eq2.12}-\eqref{eq2.13} into \eqref{eq2.11}, we have
\begin{equation}\label{eq2.14}
   \begin{split}
       \frac{1}{2}\|\Vec{U}'(t_{*})\|^2
       & + g_0\int_0^{t_*} \|\Vec{U}'(t)\|^2dt
      +  \frac{\mu_0}{2}\|\Vec{U}_{x\Bar{x}}(t_*)\|^2 \\
      & \leq \frac{1}{2} \|\Vec{U}'(0)\|^2 + \frac{1+2K_0}{2}\|\Vec{U}_{x\Bar{x}}(0)\|^2 + K(t_*) \langle \Vec{U}_{x\Bar{x}}(t_*), \Vec{U}_{x\Bar{x}}(0)  \rangle  \\
      & + \int_0^{t_*} \beta(t) \langle \Vec{U}_{x\Bar{x}}(t), \Vec{U}_{x\Bar{x}}(0) \rangle dt + \int_0^{t_*}\| \Vec{f}(t)\| \|\Vec{U}'(t) \|dt.
   \end{split}
   \end{equation}
After that, we utilize Young's inequality and $K(t)\leq C_0$ to obtain
\begin{equation}\label{eq2.15}
   \begin{split}
      &  K(t_*) \| \Vec{U}_{x\Bar{x}}(t_*)\|  \|\Vec{U}_{x\Bar{x}}(0)  \|
        \leq \frac{\mu_0}{4}\| \Vec{U}_{x\Bar{x}}(t_*)\|^2
        + \frac{C_0^2}{\mu_0} \|\Vec{U}_{x\Bar{x}}(0) \|^2,\\
        & \int_0^{t_*}\| \Vec{f}(t)\| \|\Vec{U}'(t) \|dt
        \leq \frac{1}{4}\sup\limits_{0\leq t \leq t_*}\|\Vec{U}'(t) \|^2
        + \left(\int_0^{t_*}\| \Vec{f}(t)\|dt \right)^2,
   \end{split}
   \end{equation}
and that
\begin{equation}\label{eq2.16}
   \begin{split}
         \int_0^{t_*} \beta(t) \| \Vec{U}_{x\Bar{x}}(t)\| \| \Vec{U}_{x\Bar{x}}(0) \| dt
         &\leq  \int_0^{t_*} \frac{\beta(t)}{2} \| \Vec{U}_{x\Bar{x}}(t)\|^2 dt + \int_0^{t_*} \frac{\beta(t)}{2} dt  \| \Vec{U}_{x\Bar{x}}(0) \|^2.
   \end{split}
   \end{equation}
Noting $\int_0^{\infty} \beta(t) dt = K_0\leq C_0$ and \eqref{eq2.8}, and putting \eqref{eq2.15}-\eqref{eq2.16} into \eqref{eq2.14}, we get
\begin{equation}\label{eq2.17}
   \begin{split}
       \frac{1}{2}\|\Vec{U}'(t_{*})\|^2
       & + g_0\int_0^{t_*} \|\Vec{U}'(t)\|^2dt
      +  \frac{\mu_0}{4}\|\Vec{U}_{x\Bar{x}}(t_*)\|^2 \\
      & \leq \frac{1}{2} \|\Vec{U}'(0)\|^2 + \left(\frac{1}{2}+C_0 +\frac{C_0^2}{\mu_0}\right)\|\Vec{U}_{x\Bar{x}}(0)\|^2 \\
      & + \int_0^{t_*} \beta(t) \|\Vec{U}_{x\Bar{x}}(t) \|^2 dt  + \|\Vec{f}\|_1^2 + \frac{1}{4}\sup\limits_{0\leq t \leq t_*}\|\Vec{U}'(t) \|^2.
   \end{split}
   \end{equation}
Choosing an appropriate $\Bar{t}$ such that $\|\Vec{U}'(\Bar{t}) \|^2=\sup\limits_{0\leq t \leq t_*}\|\Vec{U}'(t) \|^2$, thus we conclude from \eqref{eq2.17} that
\begin{equation}\label{eq2.18}
   \begin{split}
       \frac{1}{4}\sup\limits_{0\leq t \leq t_*}\|\Vec{U}'(t) \|^2
       &
      \leq \frac{1}{2} \|\Vec{U}'(0)\|^2 + \left(\frac{1}{2}+C_0+\frac{C_0^2}{\mu_0}\right)\|\Vec{U}_{x\Bar{x}}(0)\|^2 \\
      & + \int_0^{t_*} \beta(t) \|\Vec{U}_{x\Bar{x}}(t) \|^2 dt  + \|\Vec{f}\|_1^2.
   \end{split}
   \end{equation}
By combining \eqref{eq2.17} and \eqref{eq2.18}, we arrive at
\begin{equation*}
   \begin{split}
       \frac{1}{2}\|\Vec{U}'(t_{*})\|^2
       & + g_0\int_0^{t_*} \|\Vec{U}'(t)\|^2dt
      +  \frac{\mu_0}{4}\|\Vec{U}_{x\Bar{x}}(t_*)\|^2 \\
      & \leq  \|\Vec{U}'(0)\|^2 + \left(1+2C_0+\frac{2C_0^2}{\mu_0}\right)\|\Vec{U}_{x\Bar{x}}(0)\|^2 \\
      & + 2\int_0^{t_*} \beta(t) \|\Vec{U}_{x\Bar{x}}(t) \|^2 dt  + 2\|\Vec{f}\|_1^2, \quad \text{where}\; \int_0^{\infty} \beta(t)dt<1.
   \end{split}
   \end{equation*}
Then, for any $t_*>0$, we apply the Gr\"{o}nwall lemma to finish the proof.
\end{proof}

\subsection{Convergence analysis}
Before establishing the convergence analysis, we first introduce the following key formulas, see \cite[Eq. (10)]{Zhang} and \cite[Eq. (8)]{Hu}, i.e.,
\begin{equation}\label{eq2.20}
   \begin{split}
     & u_{x\Bar{x}}(x_j,t) = u_{xx}(x_j,t)   +  \frac{h^2}{6} \int_0^1 \sum\limits_{\ell=\pm 1}\big[u_{xxxx}(x_j+\ell\theta h,t) \big] (1-\theta)^3 d \theta, \quad t\geq 0, \\
        &  u_{xx\Bar{x}\Bar{x}}(x_j,t) = u_{xxxx}(x_j,t)
        + \frac{h^2}{6}u_{xxxxxx}(x_j,t) + O(h^4), \quad t\geq 0.
   \end{split}
   \end{equation}
In what follows, we shall deduce the error estimate of the semi-discrete difference scheme \eqref{eq2.5}-\eqref{eq2.7} for the problem \eqref{eq1.1}-\eqref{eq1.4}. From \eqref{eq1.1}-\eqref{eq1.4}, we can see that
\begin{equation}\label{eq2.21}
   \begin{split}
        u_j''(t) & + G\left( \int_0^1|u_{xx}(x,t)|^2dx \right)u_j'(t) + (u_{j})_{xx\Bar{x}\Bar{x}}(t) \\
        & - \int_0^{t}\beta(t-s)(u_{j})_{xx\Bar{x}\Bar{x}}(s)ds = f_j(t) + R_1(x_j, t) + R_2(x_j, t),
   \end{split}
   \end{equation}
\begin{equation}\label{eq2.22}
   \begin{split}
        & u_0(t) = u_{J}(t) = 0, \quad 0< t \leq T, \\
        & u_{-1}(t) = -u_{1}(t), \quad u_{J+1}(t) = -u_{J-1}(t), \quad 0< t \leq T,
   \end{split}
   \end{equation}
\begin{equation}\label{eq2.23}
   \begin{split}
        u_j(0) = u_0(x_j), \quad u_j'(0) = u_1(x_j), \quad j = 1,2,\cdots, J-1,
   \end{split}
   \end{equation}
in which,
\begin{equation}\label{eq2.24}
   \begin{split}
       R_1(x_j, t) = (u_{j})_{xx\Bar{x}\Bar{x}}(t)
       - u_{xxxx}(x_j, t),  \quad  j = 1,2,\cdots, J-1,
   \end{split}
   \end{equation}
\begin{equation}\label{eq2.25}
   \begin{split}
       R_2(x_j, t) = \int_0^t \beta(t-s)
        \left[ u_{xxxx}(x_j, s) - (u_{j})_{xx\Bar{x}\Bar{x}}(s) \right]ds.
   \end{split}
   \end{equation}
Then, denote $\Vec{\xi}(t)=\Vec{u}(t)-\Vec{U}(t)$, where $\Vec{u}(t)=[u_1(t), u_2(t), \cdots, u_{J-1}(t)]^{\top}$. By subtracting \eqref{eq2.5}-\eqref{eq2.7} from \eqref{eq2.21}-\eqref{eq2.23}, we arrive at the following error equations
\begin{equation}\label{eq2.26}
   \begin{split}
        \xi_j''(t)  & +  G\left( \|\Vec{U}_{x\Bar{x}}(t)\|^2 \right) \xi_j'(t) + (\xi_{j})_{xx\Bar{x}\Bar{x}}(t) - \int_0^{t}\beta(t-s)(\xi_{j})_{xx\Bar{x}\Bar{x}}(s)ds  \\
        &  = \sum\limits_{m=1}^{2} R_m(x_j, t)  + \left[  G\left( \|\Vec{U}_{x\Bar{x}}(t)\|^2 \right) -  G\left(
        \int_0^1|u_{xx}(x,t)|^2 dx \right)  \right]u_j'(t),
    \end{split}
   \end{equation}
\begin{equation}\label{eq2.27}
   \begin{split}
        & \xi_0(t) = \xi_{J}(t) = 0, \quad (\xi_{0}(t))_{x\Bar{x}} = (\xi_{J}(t))_{x\Bar{x}} = 0, \quad 0< t \leq T, \\
        & \xi_j(0) = 0, \quad \xi_j'(0) = 0, \quad j=1,2,\cdots, J-1.
   \end{split}
   \end{equation}
Then, we further arrive at the following theorem.

\begin{theorem} \label{theorem2.2}
Let the assumptions ($\mathbf{S1}$)-($\mathbf{S2}$) hold. Let $\beta(t)$ be denoted by \eqref{eq1.5} or \eqref{eq1.6}, $\Vec{U}(t)$ be the solution of the semi-discrete scheme \eqref{eq2.5}-\eqref{eq2.7}, and $\Vec{u}(t)$ be the solution of \eqref{eq2.21}-\eqref{eq2.23}. If satisfying
\begin{equation*}
 \begin{split}
    |u_{xxxx}(x,t)|\leq C, \quad |u_{xxxxxx}(x,t)|\leq C, \quad (x,t) \in [0,1]\times (0,T],
 \end{split}
\end{equation*}
then for any $0<t\leq T<\infty$, it holds that
\begin{equation*}
 \begin{split}
    \sqrt{\|\Vec{u}'(t)-\Vec{U}'(t)\|^2
       + \int_0^{t} \|\Vec{u}'(t)-\Vec{U}'(t)\|^2dt
      +  \|\Vec{u}_{x\Bar{x}}(t)-\Vec{U}_{x\Bar{x}}(t)\|^2}  \leq C h^2.
 \end{split}
\end{equation*}
\end{theorem}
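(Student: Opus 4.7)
The plan is to mimic the energy argument of Theorem~\ref{theorem2.1} applied to the error equation \eqref{eq2.26}, with the perturbation sources being the truncation errors and the nonlinear mismatch induced by $G$. First I would take the discrete $L^2$ inner product of \eqref{eq2.26} with $\Vec{\xi}'(t)$, use \eqref{eq2.4} to convert $\langle \Vec{\xi}_{xx\Bar{x}\Bar{x}}, \Vec{\xi}'\rangle$ into $\frac{1}{2}\frac{d}{dt}\|\Vec{\xi}_{x\Bar{x}}\|^2$, exploit $G(\cdot)\geq g_0$ from ($\mathbf{S1}$), rewrite the memory term through $K_t=-\beta$ and the integration-by-parts identities \eqref{eq2.10} and \eqref{eq2.13}, and drop the nonnegative double integral coming from Lemma~\ref{lemma1.1} (cf.~\eqref{eq2.12}). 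Since $\Vec{\xi}(0)=\Vec{\xi}'(0)=\Vec{0}$, all boundary contributions that plagued \eqref{eq2.14} vanish, and integration over $(0,t_*)$ produces
\[
\tfrac{1}{2}\|\Vec{\xi}'(t_*)\|^2 + g_0\!\int_0^{t_*}\!\|\Vec{\xi}'(t)\|^2 dt + \tfrac{\mu_0}{2}\|\Vec{\xi}_{x\Bar{x}}(t_*)\|^2 \leq \int_0^{t_*}\!\bigl\langle \Vec{R}_1(t)+\Vec{R}_2(t)+\Delta G(t)\,\Vec{u}'(t),\,\Vec{\xi}'(t)\bigr\rangle dt,
\]
where $\Delta G(t):=G(\|\Vec{U}_{x\Bar{x}}(t)\|^2)-G\!\bigl(\int_0^1|u_{xx}(x,t)|^2 dx\bigr)$ is the scalar nonlinear mismatch pulled out of the $j$-sum in the last term of \eqref{eq2.26}.

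The truncation contributions are routine: Taylor expansions via \eqref{eq2.20} together with the assumed $L^\infty$ bounds on $u_{xxxx}$ and $u_{xxxxxx}$ give $\|\Vec{R}_1(t)\|\leq Ch^2$ and $\|\Vec{R}_2(t)\|\leq Ch^2\!\int_0^t\!\beta(t-s)\,ds\leq Ch^2 K_0$, so Young's inequality on $\int_0^{t_*}\langle \Vec{R}_1+\Vec{R}_2,\Vec{\xi}'\rangle dt$ produces $Ch^4+\varepsilon\!\int_0^{t_*}\|\Vec{\xi}'\|^2 dt$, the latter being absorbed into the $g_0$-term on the left.

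The main obstacle is the nonlinear term $\Delta G(t)\langle \Vec{u}'(t),\Vec{\xi}'(t)\rangle$. Here I would invoke the Lipschitz assumption ($\mathbf{S2}$) and split
\[
\bigl|\|\Vec{U}_{x\Bar{x}}(t)\|^2 - \textstyle\int_0^1|u_{xx}(x,t)|^2 dx\bigr| \leq \bigl|\|\Vec{U}_{x\Bar{x}}\|^2 - \|\Vec{u}_{x\Bar{x}}\|^2\bigr| + \bigl|\|\Vec{u}_{x\Bar{x}}\|^2 - \textstyle\int_0^1|u_{xx}|^2 dx\bigr|,
\]
which is precisely the technical splitting flagged in the introduction. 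The first piece factors as $(\|\Vec{U}_{x\Bar{x}}\|-\|\Vec{u}_{x\Bar{x}}\|)(\|\Vec{U}_{x\Bar{x}}\|+\|\Vec{u}_{x\Bar{x}}\|)\leq C\|\Vec{\xi}_{x\Bar{x}}\|$, where the uniform bound on $\|\Vec{U}_{x\Bar{x}}\|$ comes from Theorem~\ref{theorem2.1} and that on $\|\Vec{u}_{x\Bar{x}}\|$ from the regularity of $u$; the second piece is a pure consistency error of order $O(h^2)$ obtained by combining \eqref{eq2.20} with a composite trapezoidal identity of the form $h\sum_j u_{xx}(x_j,t)^2=\int_0^1 u_{xx}^2 dx + O(h^2)$. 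Together with the boundedness of $\|\Vec{u}'(t)\|$ from regularity, this gives $|\Delta G(t)\langle \Vec{u}',\Vec{\xi}'\rangle|\leq C(\|\Vec{\xi}_{x\Bar{x}}(t)\|+h^2)\|\Vec{\xi}'(t)\|$, and Young's inequality distributes it into $C\|\Vec{\xi}_{x\Bar{x}}\|^2+Ch^4+\varepsilon\|\Vec{\xi}'\|^2$. Collecting everything and applying the Gr\"onwall lemma in $t_*$ (exactly as in the last step of Theorem~\ref{theorem2.1}) then yields $\|\Vec{\xi}'(t_*)\|^2+\int_0^{t_*}\|\Vec{\xi}'\|^2 dt+\|\Vec{\xi}_{x\Bar{x}}(t_*)\|^2\leq Ch^4$, whose square root is the claimed bound.
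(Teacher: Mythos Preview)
Your proposal is correct and follows essentially the same route as the paper's proof: take the inner product of the error equation with $\Vec{\xi}'(t)$, use $G\geq g_0$, rewrite the memory term via $K_t=-\beta$ and the positivity of $K$ (with the simplification $\Vec{\xi}_{x\Bar{x}}(0)=0$), bound the truncation errors by $O(h^2)$ through \eqref{eq2.20}, control the nonlinear mismatch by the Lipschitz assumption on $G$ combined with Theorem~\ref{theorem2.1} and a trapezoidal/Taylor argument, and close with Gr\"onwall. The only cosmetic difference is that the paper performs a three-step splitting $\int_0^1|u_{xx}|^2\,dx \to \|\Vec{u}_{xx}\|^2 \to \|\Vec{u}_{x\Bar{x}}\|^2 \to \|\Vec{U}_{x\Bar{x}}\|^2$ (equations \eqref{eq2.31}--\eqref{eq2.33}) whereas you merge the first two steps into one; also note that \eqref{eq2.13} is not actually needed here since the $\Vec{\xi}_{x\Bar{x}}(0)$ term already vanishes, and the paper justifies $\|\Vec{u}'(t)\|\leq C$ by citing \cite[(3.8)]{Cavalcanti1} rather than a generic regularity assumption.
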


\begin{proof}
    Taking the inner product of \eqref{eq2.26} with $\Vec{\xi}'(t)$, we obtain that
\begin{equation}\label{eq2.28}
   \begin{split}
      \frac{1}{2}\frac{d}{dt}\|\Vec{\xi}'(t)\|^2
      & + g_0 \|\Vec{\xi}'(t)\|^2
      +  \frac{1}{2}\frac{d}{dt}\|\Vec{\xi}_{x\Bar{x}}(t)\|^2 \\
        & + \int_0^{t}K_t(t-s) \langle\Vec{\xi}_{x\Bar{x}}(s),\Vec{\xi}_{x\Bar{x}}'(t) \rangle ds \leq  \sum\limits_{m=1}^{2} \langle \Vec{R}_m(t), \Vec{\xi}'(t) \rangle \\
        & + \left[  G\left( \|\Vec{U}_{x\Bar{x}}(t)\|^2 \right) -  G\left(
        \int_0^1|u_{xx}(x,t)|^2 dx \right)  \right] \langle \Vec{u}'(t), \Vec{\xi}'(t) \rangle.
   \end{split}
   \end{equation}
From \eqref{eq2.10} with $\Vec{\xi}_{x\Bar{x}}(0)=0$, we similarly get
\begin{equation*}
   \begin{split}
       \int_0^{t}K_t(t-s) \Vec{\xi}_{x\Bar{x}}(s)ds
       =
       -  K_0 \Vec{\xi}_{x\Bar{x}}(t)
       + \int_0^{t}K(t-s) \Vec{\xi}_{x\Bar{x}}'(s)ds,
   \end{split}
   \end{equation*}
which implies that
\begin{equation}\label{eq2.29}
   \begin{split}
        \int_0^{t}K_t(t-s) \langle\Vec{\xi}_{x\Bar{x}}(s),\Vec{\xi}_{x\Bar{x}}'(t) \rangle ds
        = \frac{-K_0}{2}\frac{d}{dt}\|\Vec{\xi}_{x\Bar{x}}(t)\|^2
       + \int_0^{t}K(t-s) \langle \Vec{\xi}_{x\Bar{x}}'(s), \Vec{\xi}_{x\Bar{x}}'(t) \rangle ds.
   \end{split}
   \end{equation}
After that, we analyze the final term of the right-hand side of \eqref{eq2.28}. First, we rewrite
\begin{equation*}
    \begin{split}
          \int_0^1 |u_{xx}(x,t)|^2dx = \sum\limits_{j=1}^{J} \int_{x_{j-1}}^{x_j}|u_{xx}(x,t)|^2dx, \quad t\geq 0.
    \end{split}
\end{equation*}
Then, defining $v=u_{xx}$ and $\varphi(v)=v^2$, we have
\begin{equation*}
    \begin{split}
          \frac{\partial^2 \varphi(v)}{\partial x^2} = \varphi_{vv}(v) \left(\frac{\partial v}{\partial x}\right)^2  + \varphi_v \frac{\partial^2 v}{\partial x^2} = 2(u_{xxx})^2 + 2u_{xx}u_{xxxx}.
    \end{split}
\end{equation*}
Thus, if satisfying $\sum\limits_{m=2}^{4}\left|\frac{\partial^m u}{\partial x^{m}}(x,t) \right|\leq C$, then $\left\| \frac{d^2 \varphi(v)}{dx^2} \right\|_{L^{\infty}} \leq c_0 < \infty$. Following the well-known trapezoid inequality that
\begin{equation*}
    \begin{split}
          \left| \int_{x_{j-1}}^{x_j}|u_{xx}(x,t)|^2dx - h \frac{|u_{xx}(x_{j-1},t)|^2+|u_{xx}(x_j,t)|^2}{2}  \right| \leq \frac{h^3}{12} \left\| \frac{d^2 \varphi(v)}{dx^2} \right\|_{L^{\infty}}
          \leq \frac{c_0}{12}h^3.
    \end{split}
\end{equation*}
By using the above inequality, \eqref{eq2.2} and \eqref{eq1.4}, we obtain that
\begin{equation}\label{eq2.30}
   \begin{split}
        \left| \int_0^1 |u_{xx}(x,t)|^2dx - \|\Vec{u}_{xx}(t)\|^2 \right| \leq \frac{c_0}{12}h^2, \quad t\geq 0.
   \end{split}
   \end{equation}
Thence, based on the assumption ($\mathbf{S2}$) and \eqref{eq2.30}, one yields
\begin{equation}\label{eq2.31}
   \begin{split}
        \left| G\left(\int_0^1 |u_{xx}(x,t)|^2dx \right) - G\left(\|\Vec{u}_{xx}(t)\|^2 \right) \right|  \leq \frac{L c_0}{12}h^2, \quad t\geq 0.
   \end{split}
   \end{equation}
Then, applying the triangle inequality, the assumption ($\mathbf{S2}$) and \eqref{eq2.20} to get
\begin{equation}\label{eq2.32}
   \begin{split}
        \left| G\left(\|\Vec{u}_{xx}(t)\|^2 \right) - G\left(\|\Vec{u}_{x\Bar{x}}(t)\|^2 \right) \right|  & \leq L(\|\Vec{u}_{xx}(t)\|+ \|\Vec{u}_{x\Bar{x}}(t)\|) \|\Vec{u}_{xx}(t)-\Vec{u}_{x\Bar{x}}(t)\| \\
        & \leq  C\|\Vec{u}_{xx}(t)-\Vec{u}_{x\Bar{x}}(t)\| \leq Ch^2.
   \end{split}
   \end{equation}
In addition, similar to the analysis of \eqref{eq2.32}, employing Theorem \ref{theorem2.1} to get
\begin{equation}\label{eq2.33}
   \begin{split}
        \left| G\left(\|\Vec{u}_{x\Bar{x}}(t)\|^2 \right) - G\left(\|\Vec{U}_{x\Bar{x}}(t)\|^2 \right) \right|  & \leq L(\|\Vec{u}_{x\Bar{x}}(t)\|+ \|\Vec{U}_{x\Bar{x}}(t)\|) \|\Vec{\xi}_{x\Bar{x}}(t)\|  \leq  C\|\Vec{\xi}_{x\Bar{x}}(t)\|.
   \end{split}
   \end{equation}
Combining \eqref{eq2.31}-\eqref{eq2.33}, then it holds that
\begin{equation}\label{eq2.34}
   \begin{split}
        \left| G\left( \|\Vec{U}_{x\Bar{x}}(t)\|^2 \right) -  G\left(
        \int_0^1|u_{xx}(x,t)|^2 dx \right)  \right|  & \leq   C\left( h^2+ \|\Vec{\xi}_{x\Bar{x}}(t)\| \right).
   \end{split}
   \end{equation}
Next, substituting \eqref{eq2.29} and \eqref{eq2.34} into \eqref{eq2.28}, using the Cauchy-Schwarz inequality and $\| \Vec{u}'(t)\|\leq C$ (see \cite[(3.8)]{Cavalcanti1}), then we further get
\begin{equation}\label{eq2.35}
   \begin{split}
      \frac{1}{2}\frac{d}{dt}\|\Vec{\xi}'(t)\|^2
      & + g_0 \|\Vec{\xi}'(t)\|^2
      +  \frac{\mu_0}{2}\frac{d}{dt}\|\Vec{\xi}_{x\Bar{x}}(t)\|^2 + \int_0^{t}K(t-s) \langle\Vec{\xi}_{x\Bar{x}}'(s),\Vec{\xi}_{x\Bar{x}}'(t) \rangle ds \\
        &  \leq  \sum\limits_{m=1}^{2} \| \Vec{R}_m(t)\|\| \Vec{\xi}'(t) \| + C\left( h^2+ \|\Vec{\xi}_{x\Bar{x}}(t)\| \right) \| \Vec{\xi}'(t) \| \\
        & \leq C\sum\limits_{m=1}^{2} \| \Vec{R}_m(t)\|^2+ C\left( h^4+ \|\Vec{\xi}_{x\Bar{x}}(t)\|^2 \right)  + \frac{g_0}{2} \| \Vec{\xi}'(t) \|^2.
   \end{split}
   \end{equation}
Integrating \eqref{eq2.35} regarding $t$ from 0 to $t_*$, and noting $\Vec{\xi}'(0)=\Vec{\xi}_{x\Bar{x}}(0)=0$ and \eqref{eq2.12},
\begin{equation}\label{eq2.36}
   \begin{split}
      \frac{1}{2}\|\Vec{\xi}'(t_*)\|^2
      & + \frac{g_0}{2}\int_0^{t_*} \|\Vec{\xi}'(t)\|^2dt
      +  \frac{\mu_0}{2}\|\Vec{\xi}_{x\Bar{x}}(t_*)\|^2 \\
        & \leq C \int_0^{t_*}\left[ \sum\limits_{m=1}^{2} \| \Vec{R}_m(t)\|^2 + h^4\right]dt + C \int_0^{t_*} \|\Vec{\xi}_{x\Bar{x}}(t)\|^2 dt, \quad t_* \leq T.
   \end{split}
   \end{equation}
Then, using \eqref{eq2.20}, \eqref{eq2.24} and \eqref{eq2.25}, it is easy to yield
\begin{equation}\label{eq2.37}
   \begin{split}
      \sum\limits_{m=1}^{2} \| \Vec{R}_m(t)\|^2 \leq \left(C+ \int_0^T\beta(t)dt \right) h^4 \leq C(T) h^4, \quad t\geq 0.
   \end{split}
   \end{equation}
By putting \eqref{eq2.37} into \eqref{eq2.36}, an application of the Gr\"{o}nwall's lemma with $T<\infty$ completes the proof.
\end{proof}

\section{Fully discrete difference scheme}\label{sec3}
In this section,  we shall consider \eqref{eq1.1} with the kernel \eqref{eq1.5} or \eqref{eq1.6}. We will construct a fully discrete difference scheme and deduce some theoretical results.

\subsection{Construction of fully discrete scheme}
Herein, we will formulate a fully discrete difference scheme based on the spatial semi-discrete scheme \eqref{eq2.5}-\eqref{eq2.7}. First, applying $K'(t)=-\beta(t)$ and denoting $\mu_0:=1-K_0$, we get
\begin{equation*}
    -\int_0^t \beta(t-s)\Delta^2 u(s)ds = K(t)\Delta^2 u_0 - K_0 \Delta^2 u(t) + \int_0^t K(t-s)\Delta^2 u_t(s) ds.
\end{equation*}
Thence, we can rewrite \eqref{eq1.1} as
\begin{align}
    & u_{tt} + q(t)u_t  + \mu_0 u_{xxxx} + \int_0^t K(t-s) u_{txxxx}(s) ds = f(t) - K(t) u_{0,xxxx}. \label{ModelA}
\end{align}
Then, we present some helpful notations for further analysis. Let $N\in \mathbb{Z}^+$ and $\Delta t$ be the uniform time step, and $U^n_j$ be the numerical approximation of $u(x_j,t_n)$, respectively, with the node $t_n=n\Delta t$. Furthermore, define the following difference quotient notations
\begin{equation*}
    \delta_tU^n_j=(U^n_j-U^{n-1}_j)/\Delta t, \quad n\geq 1, \quad  \delta_t^2 U^n_j=\delta_t(\delta_tU^n_j), \quad n\geq 2.
\end{equation*}
Then, to approximate the integral term in \eqref{ModelA}, we introduce the averaged first-order quadrature rule of product-integration type, namely
\begin{equation}\label{eq3.1}
   \begin{split}
        \Bar{q}_n(\phi) & = \frac{1}{\Delta t} \int_{t_{n-1}}^{t_n}\int_{0}^{t} K(t-s)\phi(t_p)ds dt = \sum\limits_{p=1}^{n}w_{np}\phi(t_p) \\
        & \simeq \sum\limits_{p=1}^{n}\int_{t_{p-1}}^{t_p}K(t_n-s)\phi(s)ds,
   \end{split}
   \end{equation}
where the weights
\begin{equation*}
   \begin{split}
        w_{np} = \frac{1}{\Delta t} \int_{t_{n-1}}^{t_n}\int_{t_{p-1}}^{\min(t,t_p)} K(t-s) dsdt>0, \quad 1\leq p \leq n.
   \end{split}
   \end{equation*}
Based on the backward Euler method and \eqref{eq2.5}-\eqref{eq2.7}, we replace $u_j(t_n)$ with its numerical approximation $U_j^n$ and yield the following fully discrete difference scheme with $f_j^n=f(x_j,t_n)$, i.e.,
\begin{equation}\label{eq3.2}
   \begin{split}
        \delta_t^2 U_j^n + G\left( \|\Vec{U}_{x\Bar{x}}^n\|^2 \right) \delta_t U_j^n & + \mu_0 (U_{j}^n)_{xx\Bar{x}\Bar{x}}+  \sum\limits_{p=1}^{n}w_{np} (\delta_tU_{j}^p)_{xx\Bar{x}\Bar{x}} \\
        & = f_j^n - K(t_n)U^0_{xx\Bar{x}\Bar{x}}, \quad n\geq 2,
   \end{split}
   \end{equation}
\begin{equation}\label{eq3.3}
   \begin{split}
        & U_0^n = U_{J}^n = 0, \quad U_{-1}^n = -U_{1}^n, \quad U_{J+1}^n = -U_{J-1}^n, \quad  1\leq n \leq N,
   \end{split}
   \end{equation}
\begin{equation}\label{eq3.4}
   \begin{split}
        U_j^0 = u_0(x_j), \quad \delta_tU_j^1 = u_1(x_j), \quad j = 1,2,\cdots, J-1.
   \end{split}
   \end{equation}
Here $N=T/\Delta t$ for finite or infinite $T$ that will be specified in different cases. From \eqref{eq3.4}, we obtain the relation as follows
\begin{equation}\label{eq3.5}
   \begin{split}
       U_j^1 = U_j^0 + \Delta t u_1(x_j)  =  u_0(x_j) + \Delta t u_1(x_j), \quad j = 1,2,\cdots, J-1.
   \end{split}
   \end{equation}

\subsection{Existence of numerical solutions}

In this subsection, we shall first derive the existence of the solution for \eqref{eq3.2}-\eqref{eq3.4}. To further derive the existence, we give the following Leray-Schauder theorem \cite[Theorem 6.3.3, pp.~162--163]{Ortega}.
\begin{theorem} \label{thm.LS}
    Assume that $\mathbf{C}$ is an open, bounded set in $\mathbb{R}^{m}$ containing the origin and $\Pi$: $\mathbf{C} \subset \mathbb{R}^{m} \rightarrow \mathbb{R}^{m}$ is a continuous mapping. If $\Pi y \neq \hat{\lambda} y$ whenever $\hat{\lambda}>1$ and $y$ belongs to the boundary of $\mathbf{C}$, then $\Pi$ has a fixed point in the closure of $\mathbf{C}$.
\end{theorem}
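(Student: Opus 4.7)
The plan is to derive Theorem~\ref{thm.LS} from the Brouwer topological degree (equivalently, from Brouwer's fixed point theorem via a retraction argument). First I would introduce the homotopy $H : \overline{\mathbf{C}} \times [0,1] \to \mathbb{R}^m$ defined by $H(y,t) = y - t\,\Pi y$. This interpolates between the identity at $t = 0$ and the map $I - \Pi$ at $t = 1$, whose zeros are precisely the fixed points of $\Pi$ that we seek. Continuity of $\Pi$ gives continuity of $H$ in both variables.

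The crucial step is to verify that $H(\cdot,t)$ does not vanish on $\partial\mathbf{C}$ for any $t \in [0,1]$, modulo a trivial case. For $t = 0$, $H(y,0) = y$; since $\mathbf{C}$ is open and contains the origin, $0 \notin \partial\mathbf{C}$, so $H(y,0) \neq 0$ on the boundary. For $t \in (0,1)$, if $H(y,t) = 0$ at some $y \in \partial \mathbf{C}$, then $\Pi y = \hat\lambda\, y$ with $\hat\lambda := 1/t > 1$, which directly contradicts the hypothesis. For $t = 1$, I split cases: either some $y \in \partial\mathbf{C}$ satisfies $\Pi y = y$, in which case the conclusion holds immediately, or the homotopy remains admissible on all of $[0,1]$.

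In the latter case, homotopy invariance of the Brouwer degree yields
\[
\deg(I - \Pi, \mathbf{C}, 0) \;=\; \deg(I, \mathbf{C}, 0) \;=\; 1,
\]
where the second equality uses $0 \in \mathbf{C}$. Because the degree is nonzero, the solvability property of the Brouwer degree produces some $y \in \mathbf{C}$ with $y - \Pi y = 0$, giving the desired fixed point in $\overline{\mathbf{C}}$.

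The main obstacle is the step that invokes homotopy invariance of degree; this is where all the topological content sits. An elementary alternative, appropriate if one wishes to avoid degree theory entirely, is an indirect argument: assume no fixed point exists in $\overline{\mathbf{C}}$, and build a continuous retraction $r : \overline{\mathbf{C}} \to \partial\mathbf{C}$ by following the ray from $\Pi y$ through $y$ until it first meets $\partial\mathbf{C}$. The hypothesis that $\Pi y \neq \hat\lambda y$ with $\hat\lambda > 1$ on $\partial\mathbf{C}$ is precisely what ensures that this retraction is well-defined and continuous up to the boundary, and the contradiction with the no-retraction theorem then closes the proof. Verifying that this retraction is well-defined on $\partial\mathbf{C}$ itself is the delicate point at which the sign condition enters.
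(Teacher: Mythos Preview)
Your degree-theoretic argument is correct and is the standard proof of this finite-dimensional Leray--Schauder principle. However, the paper does not prove Theorem~\ref{thm.LS} at all: it is quoted verbatim from Ortega and Rheinboldt \cite[Theorem~6.3.3]{Ortega} and used as a black box to establish existence of the fully discrete solution in Theorem~\ref{theorem3.3}. So there is no ``paper's own proof'' to compare against; you have supplied a proof where the authors simply cite one.

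One remark on your alternative retraction sketch: for a general open bounded set $\mathbf{C}$ (not necessarily convex or even star-shaped), the map ``follow the ray from $\Pi y$ through $y$ to the first boundary hit'' need not be a retraction, because for $y\in\partial\mathbf{C}$ that ray may re-enter $\mathbf{C}$ before $s=1$ and strike $\partial\mathbf{C}$ at a point other than $y$. The hypothesis $\Pi y\neq\hat\lambda y$ for $\hat\lambda>1$ does not by itself rule this out; it only controls rays through the origin. The clean route for arbitrary $\mathbf{C}$ really is the degree argument you gave first, so I would drop the retraction paragraph or restrict it to the case where $\mathbf{C}$ is a ball (which is in fact all the paper needs, since Theorem~\ref{theorem3.3} applies the result with $\mathbf{C}=\mathcal{B}(0,r)$).
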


\vskip 1mm
Below we apply this theorem to illustrate that, without any restrictions regarding time-space step sizes $\Delta t$, $h$ and initial-value conditions $U^0$, $U^1$.

Denote $\Vec{V}^n = \delta_t\Vec{U}^n$ for $1\leq n \leq N$. Here we rewrite \eqref{eq3.2} as
\begin{equation}\label{eq3.39}
   \begin{split}
        \delta_t \Vec{V}^n & + G\left( \|\Delta t \Vec{V}_{x\Bar{x}}^n + Y_{x\Bar{x}}\|^2 \right) \Vec{V}^n +  [w_{nn}+\mu_0\Delta t]\Vec{V}^n_{xx\Bar{x}\Bar{x}}
       \\
       & = \Vec{f}^n- \mu_0 Y_{xx\Bar{x}\Bar{x}} - \sum\limits_{p=1}^{n-1}w_{np} \Vec{V}^p_{xx\Bar{x}\Bar{x}} - K(t_n)\Vec{U}^0_{xx\Bar{x}\Bar{x}}
       := \Vec{F}^n,
   \end{split}
   \end{equation}
where $Y:=\Vec{U}^{n-1}$ and $\Vec{F}^n$ are known at each time level, provided initial-value conditions. Then we present the following theorem.

\begin{theorem}\label{theorem3.3}
Based on the conditions in Theorem \ref{theorem2.1}, given $J, \Delta t >0$, $T\leq \infty$ and $\Vec{U}^0,\Vec{U}^1\in \mathbb{R}^{J-1}$, \eqref{eq3.2} has a solution $\Vec{U}^2, \Vec{U}^3, \cdots, \Vec{U}^N$.
\end{theorem}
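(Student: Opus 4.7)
The plan is to proceed by induction on $n \geq 2$ and, at each time level, obtain $\Vec{U}^n$ by applying Theorem \ref{thm.LS} to the reformulation (\ref{eq3.39}). Given $\Vec{U}^0,\Vec{U}^1$ from (\ref{eq3.4})-(\ref{eq3.5}), and assuming $\Vec{U}^0,\ldots,\Vec{U}^{n-1}$ already constructed, the quantities $Y=\Vec{U}^{n-1}$, $\Vec{V}^{n-1}=\delta_t\Vec{U}^{n-1}$ and the right-hand side $\Vec{F}^n$ in (\ref{eq3.39}) are all known, so it will suffice to produce $\Vec{V}^n\in\mathbb{R}^{J-1}$ solving (\ref{eq3.39}); one then recovers $\Vec{U}^n=Y+\Delta t\,\Vec{V}^n$.

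Writing $\delta_t\Vec{V}^n=(\Vec{V}^n-\Vec{V}^{n-1})/\Delta t$, I would define an auxiliary map $\Pi:\mathbb{R}^{J-1}\to\mathbb{R}^{J-1}$ by freezing the nonlinear argument of $G$: for each $\Vec{W}\in\mathbb{R}^{J-1}$, let $\Vec{V}=\Pi(\Vec{W})$ be the unique solution of the linear system
\begin{equation*}
\Bigl(\tfrac{1}{\Delta t}+G\bigl(\|\Delta t\,\Vec{W}_{x\Bar{x}}+Y_{x\Bar{x}}\|^2\bigr)\Bigr)\Vec{V}+[w_{nn}+\mu_0\Delta t]\Vec{V}_{xx\Bar{x}\Bar{x}}=\Vec{F}^n+\tfrac{1}{\Delta t}\Vec{V}^{n-1}.
\end{equation*}
The coefficient operator has the form $\bigl(\tfrac{1}{\Delta t}+G(\cdot)\bigr)I+[w_{nn}+\mu_0\Delta t]A$, where $A$ is the matrix of $(\cdot)_{xx\Bar{x}\Bar{x}}$ under the boundary conditions (\ref{eq3.3}). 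By (\ref{eq2.4}) we have $\langle A\Vec{V},\Vec{V}\rangle=\|\Vec{V}_{x\Bar{x}}\|^2\geq 0$, while $\tfrac{1}{\Delta t}+G(\cdot)\geq\tfrac{1}{\Delta t}+g_0>0$ by (S1) and $w_{nn},\mu_0\Delta t\geq 0$; therefore the coefficient matrix is symmetric positive definite, so $\Pi$ is well defined. Continuity of $\Pi$ follows from continuity of $G$ (see (S2)) and of matrix inversion on the set of invertible matrices. A fixed point of $\Pi$ is precisely a solution $\Vec{V}^n$ of (\ref{eq3.39}).

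To invoke Theorem \ref{thm.LS} I will establish an a priori bound on every $\Vec{V}$ satisfying $\Pi\Vec{V}=\hat\lambda\Vec{V}$ with $\hat\lambda>1$. Such a $\Vec{V}$ means $\hat\lambda\Vec{V}$ solves the defining linear equation with $\Vec{W}=\Vec{V}$; pairing that equation in the discrete inner product with $\Vec{V}$ and using (\ref{eq2.4}), (S1) and the nonnegativity of the memory and elastic coefficients, I expect to obtain
\begin{equation*}
\hat\lambda\bigl(\tfrac{1}{\Delta t}+g_0\bigr)\|\Vec{V}\|^2\leq\bigl(\tfrac{1}{\Delta t}\|\Vec{V}^{n-1}\|+\|\Vec{F}^n\|\bigr)\|\Vec{V}\|,
\end{equation*}
whence, since $\hat\lambda>1$,
\begin{equation*}
\|\Vec{V}\|\leq M_n:=\bigl(\tfrac{1}{\Delta t}\|\Vec{V}^{n-1}\|+\|\Vec{F}^n\|\bigr)\Big/\bigl(\tfrac{1}{\Delta t}+g_0\bigr).
\end{equation*}
Choosing $\mathbf{C}$ to be the open Euclidean ball in $\mathbb{R}^{J-1}$ of radius $M_n+1$ then rules out any solution of $\Pi\Vec{V}=\hat\lambda\Vec{V}$ lying on $\p\mathbf{C}$ for $\hat\lambda>1$, so Theorem \ref{thm.LS} produces the required fixed point $\Vec{V}^n\in\bar{\mathbf{C}}$. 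Iterating this construction from $n=2$ up to $n=N$ yields $\Vec{U}^2,\ldots,\Vec{U}^N$ and completes the argument. The hard part is that the coefficient $G\bigl(\|\Delta t\,\Vec{V}^n_{x\Bar{x}}+Y_{x\Bar{x}}\|^2\bigr)$ depends self-consistently on the unknown $\Vec{V}^n$, which blocks a purely linear-algebraic existence proof; decoupling this dependence via $\Pi$ and controlling the result by the simple energy-type bound above is the mechanism that makes Leray-Schauder applicable.
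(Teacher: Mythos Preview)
Your argument is correct and follows the same overall strategy as the paper: induction on the time level, a fixed-point reformulation of \eqref{eq3.39}, an a~priori energy bound obtained by testing with $\Vec{V}$ and invoking \eqref{eq2.4} together with (S1), and then Theorem~\ref{thm.LS}.

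The one genuine difference is the choice of fixed-point map. The paper rewrites \eqref{eq3.39} directly as $\Vec{V}^n=\Xi(\Vec{V}^n)$ with the \emph{explicit} map
\[
\Xi(W)=-\Delta t\Big\{G\big(\|\Delta t\,W_{x\Bar{x}}+Y_{x\Bar{x}}\|^2\big)W+[w_{nn}+\mu_0\Delta t]\,W_{xx\Bar{x}\Bar{x}}\Big\}+\Vec{V}^{n-1}+\Delta t\,\Vec{F}^n,
\]
so no linear solve is needed and continuity of $\Xi$ is immediate from continuity of $G$. You instead define $\Pi(\Vec{W})$ as the solution of a linear system with the argument of $G$ frozen at $\Vec{W}$; this costs you the extra step of showing the coefficient operator is symmetric positive definite (which you do correctly via \eqref{eq2.4} and (S1)) and that inversion depends continuously on $\Vec{W}$. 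Both formulations produce the same energy inequality after pairing with $\Vec{V}$, and the resulting a~priori bound is equivalent to the paper's $\hat\lambda\|W\|^2\le \langle Z,W\rangle$ estimate. In short: your route is slightly less direct but equally valid; the paper's choice avoids the invertibility detour.
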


\begin{proof} We shall demonstrate that, given $\Vec{U}^0, \Vec{U}^1, \cdots, \Vec{U}^{n-1}$, equation \eqref{eq3.2} for $\Vec{U}^n$ has a solution. Since $\Vec{U}^n=Y+\Delta t \Vec{V}^n$, we only need to prove that, given $\Vec{V}^1, \Vec{V}^2, \cdots, \Vec{V}^{n-1}$, \eqref{eq3.2} for $\Vec{V}^n$ has a solution.

If we denote a mapping $\Pi: \mathbb{R}^{J-1}\rightarrow\mathbb{R}^{J-1}$ via
\begin{equation*}
    \begin{split}
        \Pi(W) = -\Delta t \left\{ G\left( \|\Delta t W_{x\Bar{x}} + Y_{x\Bar{x}}\|^2 \right) W  + [w_{nn}+\mu_0\Delta t] W_{xx\Bar{x}\Bar{x}} \right\},
    \end{split}
\end{equation*}
then $\Vec{V}^n$ is a solution of \eqref{eq3.2} iff
\begin{equation*}
    \begin{split}
       \Vec{V}^n = \Pi(\Vec{V}^n) + Z, \quad \text{where} \; Z = \Vec{V}^{n-1} + \Delta t \Vec{F}^n.
    \end{split}
\end{equation*}
Hence, we have to illustrate that, the mapping $\Xi(\cdot)=\Pi(\cdot)+Z$ has a fixed point. We next shall utilize the Leray-Schauder theorem, see Theorem \ref{thm.LS}. Considering an open ball $\mathbf{C}=\mathcal{B}(0,r)$ in $\mathbb{R}^{J-1}$ with the norm $\|\cdot\|$ in \eqref{eq2.2}. Suppose that for $W$ in the boundary of $\mathbf{C}$ and $\hat{\lambda}>1$,
\begin{equation}\label{eq3.40}
   \begin{split}
        \hat{\lambda}W =\Xi(W)=\Pi(W)+Z.
   \end{split}
   \end{equation}
By taking the inner product of \eqref{eq3.40} with $W$, and using the assumption ($\mathbf{S1}$) and \eqref{eq2.4}, we have
\begin{equation*}
    \begin{split}
       \hat{\lambda}\|W\|^2 \leq \langle Z, W \rangle \Rightarrow \hat{\lambda}\|W\|^2 \leq \frac{1}{4}\|W\|^2 + \|Z\|^2,
    \end{split}
\end{equation*}
which implies that
\begin{equation*}
    \begin{split}
       \hat{\lambda}  \leq \frac{1}{4} + \frac{1}{r^2}\|Z\|^2.
    \end{split}
\end{equation*}
For a large $r$, the above formula contradicts the hypothesis $\hat{\lambda} >1$. Thence, \eqref{eq3.40} has no solution on the boundary of $\mathbf{C}$, and then Theorem \ref{thm.LS} ensures the existence of a fixed point of $\Xi$ in the closure of $\mathbf{C}$. We then complete the proof.
\end{proof}

\subsection{Long-time stability}

Herein, we shall establish the long-time stability of the fully discrete finite difference scheme \eqref{eq3.2}-\eqref{eq3.4}. First, we will give the following results of long-time stability.

\begin{theorem}\label{theorem3.1}Let the assumption ($\mathbf{S1}$) hold. Let $\beta(t)$ be denoted by \eqref{eq1.5} or \eqref{eq1.6}, $\Vec{f}(t)$ satisfies \eqref{eq2.8} and $\Vec{U}^n=[U_1^n, U_2^n, \cdots, U_{J-1}^n]^{\top}$ be the solution of \eqref{eq3.2}-\eqref{eq3.4}. If $u_0(x), u_1(x) \in C^4([0,1])$, then for $T\leq \infty$, it holds that
     \begin{equation*}
        \begin{split}
        & \frac{1}{2} \|\delta_t \Vec{U}^N\|^2 +
        g_0 \Delta t\sum\limits_{n=2}^N \|\delta_t\Vec{U}^n\|^2 + \frac{\mu_0}{4} \| \Vec{U}^N_{x\Bar{x}}\|^2  \\
       &\leq C \left[ \| \Vec{U}'(0)\|^2 +  \| \Vec{U}_{x\Bar{x}}(0)\|^2
       + (\Delta t)^2\|\Vec{U}_{x\Bar{x}}'(0)\|^2 +  \|\Vec{f}\|_1^2 \right],
        \end{split}
     \end{equation*}
where $\Vec{U}(0)=[u_0(x_1), \cdots, u_0(x_{J-1})]^{\top}$ and $\Vec{U}'(0)=[u_1(x_1), \cdots, u_1(x_{J-1})]^{\top}$.
\end{theorem}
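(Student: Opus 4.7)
The plan is to mirror the semi-discrete energy argument of Theorem \ref{theorem2.1}, replacing differential identities by discrete summation-by-parts and exploiting positivity of the averaged PI weight matrix $W=(w_{np})$. I would take the discrete inner product of \eqref{eq3.2} with $\delta_t\Vec{U}^n$, multiply by $\Delta t$, and sum over $n=2,\ldots,M$ for an arbitrary $M\leq N$. The acceleration term satisfies
\begin{equation*}
\Delta t\sum_{n=2}^M\langle\delta_t^2\Vec{U}^n,\delta_t\Vec{U}^n\rangle\geq\tfrac{1}{2}\|\delta_t\Vec{U}^M\|^2-\tfrac{1}{2}\|\delta_t\Vec{U}^1\|^2=\tfrac{1}{2}\|\delta_t\Vec{U}^M\|^2-\tfrac{1}{2}\|\Vec{U}'(0)\|^2,
\end{equation*}
while $(\mathbf{S1})$ supplies $g_0\Delta t\sum_{n=2}^M\|\delta_t\Vec{U}^n\|^2$ from the damping. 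Applying \eqref{eq2.4} together with the identity $2\langle a,a-b\rangle\geq\|a\|^2-\|b\|^2$ to the stiffness term yields
\begin{equation*}
\mu_0\Delta t\sum_{n=2}^M\langle\Vec{U}^n_{xx\Bar{x}\Bar{x}},\delta_t\Vec{U}^n\rangle\geq\tfrac{\mu_0}{2}\bigl(\|\Vec{U}^M_{x\Bar{x}}\|^2-\|\Vec{U}^1_{x\Bar{x}}\|^2\bigr),
\end{equation*}
and by \eqref{eq3.5}, $\|\Vec{U}^1_{x\Bar{x}}\|^2\leq 2\|\Vec{U}_{x\Bar{x}}(0)\|^2+2(\Delta t)^2\|\Vec{U}'_{x\Bar{x}}(0)\|^2$, which is precisely where the $(\Delta t)^2\|\Vec{U}'_{x\Bar{x}}(0)\|^2$ contribution in the final bound enters.

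The main obstacle is controlling the memory contribution together with the $-K(t_n)\Vec{U}^0_{xx\Bar{x}\Bar{x}}$ source term. Using \eqref{eq2.4} I first rewrite
\begin{equation*}
\Delta t\sum_{n=2}^M\sum_{p=1}^n w_{np}\langle(\delta_t\Vec{U}^p)_{xx\Bar{x}\Bar{x}},\delta_t\Vec{U}^n\rangle=\Delta t\sum_{n=2}^M\sum_{p=1}^n w_{np}\langle(\delta_t\Vec{U}^p)_{x\Bar{x}},(\delta_t\Vec{U}^n)_{x\Bar{x}}\rangle,
\end{equation*}
and discard it by appealing to a discrete positive-type property inherited from $K$ via Lemma \ref{lemma1.1}: since $K$ is of positive type and each $w_{np}$ is a local average of $K(t-s)$, the matrix $W$ is positive semidefinite, so the double sum is nonnegative. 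For the source term I apply summation by parts in $n$:
\begin{equation*}
\Delta t\sum_{n=2}^M K(t_n)\langle\Vec{U}^0_{x\Bar{x}},(\delta_t\Vec{U}^n)_{x\Bar{x}}\rangle=K(t_M)\langle\Vec{U}^0_{x\Bar{x}},\Vec{U}^M_{x\Bar{x}}\rangle-K(t_1)\langle\Vec{U}^0_{x\Bar{x}},\Vec{U}^1_{x\Bar{x}}\rangle-\sum_{n=2}^{M-1}\bigl[K(t_{n+1})-K(t_n)\bigr]\langle\Vec{U}^0_{x\Bar{x}},\Vec{U}^n_{x\Bar{x}}\rangle.
\end{equation*}
Because $K(t_n)-K(t_{n+1})=\int_{t_n}^{t_{n+1}}\beta(s)ds\geq 0$ with total mass bounded by $K_0<1$, the boundary contribution is absorbed into $\tfrac{\mu_0}{8}\|\Vec{U}^M_{x\Bar{x}}\|^2$ via Young's inequality (producing constants proportional to $\|\Vec{U}_{x\Bar{x}}(0)\|^2$), while the telescoped sum produces a $\beta$-weighted discrete sum of $\|\Vec{U}^n_{x\Bar{x}}\|^2$, handled exactly as in \eqref{eq2.16}.

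Finally, the forcing term is bounded as in \eqref{eq2.15} by $\tfrac{1}{4}\max_{1\leq n\leq M}\|\delta_t\Vec{U}^n\|^2+\bigl(\Delta t\sum_n\|\Vec{f}^n\|\bigr)^2$, where the second factor is a Riemann sum dominated by $\|\Vec{f}\|_1^2$. Choosing $M^\star$ realizing $\max_{1\leq n\leq N}\|\delta_t\Vec{U}^n\|^2$ and combining the estimate at $M^\star$ with the one at generic $M$ removes the supremum, mirroring the passage from \eqref{eq2.17} to \eqref{eq2.18}. The residual $\beta$-weighted discrete sum of $\|\Vec{U}^n_{x\Bar{x}}\|^2$ is then eliminated by a discrete Gr\"{o}nwall lemma, exploiting $\Delta t\sum_n\beta(t_n)<1$ uniformly in $N$. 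The hardest step I anticipate is the rigorous verification that $W$ is positive semidefinite uniformly in $N$ and $\Delta t$; this is the discrete counterpart of \eqref{eq2.12} and constitutes the transformation of summation terms highlighted in the introduction.
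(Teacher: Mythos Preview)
Your proposal is correct and follows essentially the same route as the paper: inner product with $\delta_t\Vec{U}^n$, the identities \eqref{eq3.7}, positive semidefiniteness of the averaged PI weights (the paper cites McLean--Thom\'ee--Wahlbin for this, which resolves the ``hardest step'' you flagged), the summation-by-parts \eqref{eq3.12} for the $K(t_n)$ term, the sup trick on $\|\delta_t\Vec{U}^n\|$, and a discrete Gr\"onwall inequality with $\beta$-weighted coefficients summing to less than one. Two minor slips to fix: the boundary term in your Abel summation should read $-K(t_2)\langle\Vec{U}^0_{x\Bar{x}},\Vec{U}^1_{x\Bar{x}}\rangle$ rather than $-K(t_1)\langle\cdots\rangle$, and to invoke positivity of the full double sum you must complete it to start at $n=1$, which adds the harmless term $\Delta t\,w_{11}\|\delta_t\Vec{U}^1_{x\Bar{x}}\|^2=\Delta t\,w_{11}\|\Vec{U}'_{x\Bar{x}}(0)\|^2$ on the right (the paper does this explicitly in \eqref{eq3.10}).
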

\begin{proof} We take the inner product of \eqref{eq3.2} with $\delta_t \Vec{U}^n$, then for $n \geq 2$,
\begin{equation}\label{eq3.6}
   \begin{split}
        \langle \delta_t^2\Vec{U}^n, \delta_t\Vec{U}^n \rangle & +  G\left( \|\Vec{U}_{x\Bar{x}}^n\|^2 \right) \|\delta_t\Vec{U}^n\|^2 + \mu_0\langle \Vec{U}_{x\Bar{x}}^n, \delta_t\Vec{U}_{x\Bar{x}}^n \rangle \\
        & + \sum\limits_{p=1}^{n}w_{np} \langle \delta_t\Vec{U}_{x\Bar{x}}^p, \delta_t\Vec{U}_{x\Bar{x}}^n \rangle  = \langle \Vec{f}^n, \delta_t\Vec{U}^n \rangle - K(t_n)\langle \Vec{U}_{x\Bar{x}}^0, \delta_t\Vec{U}_{x\Bar{x}}^n \rangle.
   \end{split}
   \end{equation}
First, noting that
\begin{equation}\label{eq3.7}
   \begin{split}
        & \langle \delta_t^2 \Vec{U}^n, \delta_t \Vec{U}^n \rangle
        = \frac{1}{2}\delta_t \|\delta_t \Vec{U}^n\|^2 +
        \frac{1}{2}\Delta t \|\delta_t^2 \Vec{U}^n\|^2, \\
        & \langle  \Vec{U}^n_{x\Bar{x}}, \delta_t \Vec{U}^n_{x\Bar{x}} \rangle
        = \frac{1}{2}\delta_t \| \Vec{U}^n_{x\Bar{x}}\|^2 +
        \frac{1}{2}\Delta t \|\delta_t \Vec{U}^n_{x\Bar{x}}\|^2.
   \end{split}
   \end{equation}
By substituting \eqref{eq3.7} into \eqref{eq3.6}, then we have
\begin{equation}\label{eq3.9}
   \begin{split}
        \frac{1}{2}\delta_t \|\delta_t \Vec{U}^n\|^2 &+
        \frac{1}{2}\Delta t \|\delta_t^2 \Vec{U}^n\|^2  +  G\left( \|\Vec{U}_{x\Bar{x}}^n\|^2 \right) \|\delta_t\Vec{U}^n\|^2 + \frac{\mu_0}{2}\delta_t \| \Vec{U}^n_{x\Bar{x}}\|^2  \\
        &+
        \frac{\mu_0}{2}\Delta t \|\delta_t \Vec{U}^n_{x\Bar{x}}\|^2  + \sum\limits_{p=1}^{n}w_{np} \langle \delta_t\Vec{U}_{x\Bar{x}}^p, \delta_t\Vec{U}_{x\Bar{x}}^n \rangle \\
       & + K(t_n)\langle \Vec{U}_{x\Bar{x}}^0, \delta_t\Vec{U}_{x\Bar{x}}^n \rangle = \langle \Vec{f}^n, \delta_t\Vec{U}^n \rangle.
   \end{split}
   \end{equation}
With the assupmtion ($\mathbf{S1}$), summing for \eqref{eq3.9} regarding $n$ from 2 to $N$, and multiplying $\Delta t$, then we get
\begin{equation}\label{eq3.10}
   \begin{split}
       & \frac{1}{2} \|\delta_t \Vec{U}^N\|^2 +
        g_0 \Delta t\sum\limits_{n=2}^N \|\delta_t\Vec{U}^n\|^2 + \frac{\mu_0}{2} \| \Vec{U}^N_{x\Bar{x}}\|^2  \\
        &+ \Delta t \sum\limits_{n=1}^{N}\sum\limits_{p=1}^{n}w_{np} \langle \delta_t\Vec{U}_{x\Bar{x}}^p, \delta_t\Vec{U}_{x\Bar{x}}^n \rangle + \Delta t\sum\limits_{n=2}^N K(t_n)\langle  \Vec{U}_{x\Bar{x}}^0, \delta_t\Vec{U}_{x\Bar{x}}^n \rangle \\
       &\leq  \frac{1}{2} \|\delta_t \Vec{U}^1\|^2 + \frac{\mu_0}{2} \| \Vec{U}^1_{x\Bar{x}}\|^2
       + (\Delta t) w_{11}\|\delta_t\Vec{U}^1_{x\Bar{x}}\|^2  +\Delta t\sum\limits_{n=2}^N \| \Vec{f}^n\|\| \delta_t\Vec{U}^n \|.
   \end{split}
   \end{equation}
Next, we shall further estimate \eqref{eq3.10}. First, we analyze the fourth term of the left-hand side of \eqref{eq3.10}. Based on \eqref{eq1.6}, Lemma \ref{lemma1.1} shows that the kernel $K(t)$ is of positive type, thus we see that \cite[pp.~63-64]{McLean1}
   \begin{equation}\label{eq3.11}
   \begin{split}
       \Delta t \sum\limits_{n=1}^{N}\sum\limits_{p=1}^{n}w_{np} \langle \delta_t\Vec{U}_{x\Bar{x}}^p, \delta_t\Vec{U}_{x\Bar{x}}^n \rangle \geq 0.
   \end{split}
   \end{equation}
Then for $N\geq 2$, we rewrite
\begin{equation}\label{eq3.12}
   \begin{split}
        \Delta t\sum\limits_{n=2}^N K(t_n)   \delta_t\Vec{U}_{x\Bar{x}}^n
    & = K(t_N)\Vec{U}^N_{x\Bar{x}} - K(t_2)\Vec{U}^1_{x\Bar{x}}
    + \sum\limits_{n=2}^{N-1}[K(t_n)-K(t_{n+1})] \Vec{U}^n_{x\Bar{x}} \\
    & = K(t_N)\Vec{U}^N_{x\Bar{x}} - K(t_2)\Vec{U}^1_{x\Bar{x}}
    + \sum\limits_{n=2}^{N-1}\left[\int_{t_n}^{t_{n+1}}\beta(t)dt \right] \Vec{U}^n_{x\Bar{x}},
   \end{split}
\end{equation}
which further implies that
\begin{equation*}
   \begin{split}
         \Delta t\sum\limits_{n=2}^N K(t_n)\langle  \Vec{U}_{x\Bar{x}}^0, \delta_t\Vec{U}_{x\Bar{x}}^n \rangle
         & \leq C_0(\|\Vec{U}^N_{x\Bar{x}}\| + \|\Vec{U}^1_{x\Bar{x}}\|) \|\Vec{U}^0_{x\Bar{x}}\|
         + \int_{t_2}^{t_{N}}|\beta(t)|dt \|\Vec{U}^0_{x\Bar{x}}\|^2
         \\
         & + \frac{1}{4}\sum\limits_{n=2}^{N-1}\left[\int_{t_n}^{t_{n+1}}|\beta(t)|dt \right] \|\Vec{U}^n_{x\Bar{x}}\|^2.
   \end{split}
   \end{equation*}
After that, we use the Young's inequality to obtain
\begin{equation*}
   \begin{split}
         K_0(\|\Vec{U}^N_{x\Bar{x}}\| + \|\Vec{U}^1_{x\Bar{x}}\|) \|\Vec{U}^0_{x\Bar{x}}\|
         \leq \frac{\mu_0}{4}\|\Vec{U}^N_{x\Bar{x}}\|^2 + \frac{\mu_0}{4}\|\Vec{U}^1_{x\Bar{x}}\|^2 + \frac{2C_0^2}{\mu_0}\|\Vec{U}^0_{x\Bar{x}}\|^2,
   \end{split}
   \end{equation*}
and that
\begin{equation}\label{eq3.14}
   \begin{split}
        \Delta t \sum\limits_{n=2}^{N}\|\Vec{f}^n\| \|\delta_t\Vec{U}^n\|& \leq  \frac{1}{4}\max\limits_{2\leq n \leq N}\|\delta_t\Vec{U}^n\|^2   +  \left( \Delta t \sum\limits_{n=2}^{N}\|\Vec{f}^n\| \right)^2 \\
      & \leq \frac{1}{4}\max\limits_{2\leq n \leq N}\|\delta_t\Vec{U}^n\|^2   + \left( \int_{0}^{t_N}\|\Vec{f}(t)\|dt \right)^2.
   \end{split}
   \end{equation}
Then, substituting \eqref{eq3.11}-\eqref{eq3.14} into \eqref{eq3.10}, we employ \eqref{eq2.8} and \eqref{eq3.4}-\eqref{eq3.5} to yield
\begin{equation}\label{eq3.15}
   \begin{split}
       & \frac{1}{2} \|\delta_t \Vec{U}^N\|^2 +
        g_0 \Delta t\sum\limits_{n=2}^N \|\delta_t\Vec{U}^n\|^2 + \frac{\mu_0}{4} \| \Vec{U}^N_{x\Bar{x}}\|^2  \\
       &\leq  \frac{1}{2} \| \Vec{U}'(0)\|^2 + \left(1+\frac{2C_0^2}{\mu_0}\right) \| \Vec{U}_{x\Bar{x}}(0)\|^2
       + \left( 2C_0 + \mu_0 \right)  (\Delta t)^2\|\Vec{U}_{x\Bar{x}}'(0)\|^2 \\
       & +  \|\Vec{f}\|_1^2 + \frac{1}{4}\sum\limits_{n=2}^{N-1}\left[\int_{t_n}^{t_{n+1}}|\beta(t)|dt \right] \|\Vec{U}^n_{x\Bar{x}}\|^2 + \frac{1}{4}\max\limits_{2\leq n \leq N}\|\delta_t\Vec{U}^n\|^2,
   \end{split}
   \end{equation}
where $w_{11}\leq C_0 \Delta t$, and then we conclude from \eqref{eq3.15} that
\begin{equation*}
   \begin{split}
        \frac{1}{4}& \max\limits_{2\leq n \leq N}\|\delta_t\Vec{U}^n\|^2
       \leq  \frac{1}{2} \| \Vec{U}'(0)\|^2 + \left(1+\frac{2C_0^2}{\mu_0}\right) \| \Vec{U}_{x\Bar{x}}(0)\|^2 +  \|\Vec{f}\|_1^2
       \\
       &+ \left( 2C_0 + \mu_0 \right)  (\Delta t)^2\|\Vec{U}_{x\Bar{x}}'(0)\|^2   + \frac{1}{4}\sum\limits_{n=2}^{N-1}\left[\int_{t_n}^{t_{n+1}}|\beta(t)|dt \right] \|\Vec{U}^n_{x\Bar{x}}\|^2.
   \end{split}
   \end{equation*}
By utilizing \eqref{eq3.15} and the above estimate, it holds that
\begin{equation*}
   \begin{split}
       & \frac{1}{2} \|\delta_t \Vec{U}^N\|^2 +
        g_0 \Delta t\sum\limits_{n=2}^N \|\delta_t\Vec{U}^n\|^2 + \frac{\mu_0}{4} \| \Vec{U}^N_{x\Bar{x}}\|^2  \\
       &\leq   \| \Vec{U}'(0)\|^2 + \left(2+\frac{4C_0^2}{\mu_0}\right) \| \Vec{U}_{x\Bar{x}}(0)\|^2
       + (4C_0+2\mu_0) (\Delta t)^2\|\Vec{U}_{x\Bar{x}}'(0)\|^2  \\
       & +  2\|\Vec{f}\|_1^2 + \frac{1}{2}\sum\limits_{n=2}^{N-1}\left[\int_{t_n}^{t_{n+1}}|\beta(t)|dt \right] \|\Vec{U}^n_{x\Bar{x}}\|^2, \quad N\geq 2.
   \end{split}
   \end{equation*}
Noting that $\lim\limits_{N\rightarrow \infty}\sum\limits_{n=2}^{N-1}\left[\int_{t_n}^{t_{n+1}}|\beta(t)|dt \right] \leq \int_0^{\infty}|\beta(t)|dt <1$, for the above formula, we use the discrete Gr\"{o}nwall's lemma to finish the proof.
\end{proof}

\section{Convergence and uniqueness}\label{sec4}

In this section, we will deduce the convergence and uniqueness of the fully discrete finite difference scheme.

\subsection{Convergence analysis}
In what follows, we shall construct the convergence analysis of the fully discrete difference scheme \eqref{eq3.2}-\eqref{eq3.4}. Before that, we shall assume the regularity of the solution to the initial-value problem \eqref{eq1.1}-\eqref{eq1.3}, for subsequent error estimations. Since the regularity of the nonlinear problem \eqref{eq1.1}-\eqref{eq1.3} cannot be obtained at present, thus,  following the regularity assumption of solutions for the problem in \cite{Baker}, we give the following regularity assumptions applicable to our nonlinear cases.

\textbf{Assumption A.} For $T<\infty$, suppose the solution of \eqref{eq1.1}-\eqref{eq1.3}
\begin{equation*}
   \begin{split}
       u \in C([0,T];H^3(0,1)) \cap C^2([0,T];H^2(0,1)) \cap  C^3((0,T];H^2(0,1)),
   \end{split}
   \end{equation*}
and there exists constants $c_j>0$, for $j=1,2,3$, such that
\begin{equation*}
   \begin{split}
        & |u_{ttt}(x,t)|\leq c_1 t^{\alpha-1},  \quad  \alpha = \frac{1}{2}, 1,   \quad  |u_{ttxxxx}(x,t)|\leq c_2, \\
        &  |u_{txxxxxx}(x,t)|\leq c_3, \quad  (x,t) \in [0,1]\times (0,T].
   \end{split}
   \end{equation*}
It is mentioned that the assumptions in \textbf{Assumption A} will be used in the subsequent error analysis.

\vskip 1mm
Denote $u_j^n:=u(x_j,t_n)$ for $0\leq j \leq J$ and $0\leq n \leq N$. Subsequently, based on the backward Euler method and the quadrature rule \eqref{eq3.1}, we consider \eqref{ModelA} at the point $t=t_n$, then
\begin{equation}\label{eq3.18}
   \begin{split}
        \delta_t^2 u_j^n &+ G\left( \int_0^1 |u_{xx}(x,t_n)|^2dx \right) \delta_t u_j^n  + \mu_0(u_{j}^n)_{xx\Bar{x}\Bar{x}}+  \sum\limits_{p=1}^{n}w_{np}(\delta_t u_{j}^p)_{xx\Bar{x}\Bar{x}} \\
        & = f_j^n - K(t_n)( u_{j}^0)_{xx\Bar{x}\Bar{x}} + \sum\limits_{m=1}^{5}(R_m)_j^n, \quad n\geq 2,
   \end{split}
   \end{equation}
\begin{equation}\label{eq3.19}
   \begin{split}
        & u_0^n = u_{J}^n = 0, \quad u_{-1}^n = -u_{1}^n, \quad u_{J+1}^n = -u_{J-1}^n, \quad  1\leq n \leq N,
   \end{split}
   \end{equation}
\begin{equation}\label{eq3.20}
   \begin{split}
        u_j^0 = u_0(x_j), \quad (u_t)_j^0 = u_1(x_j), \quad j = 1,2,\cdots, J-1,
   \end{split}
   \end{equation}
in which the truncation errors
\begin{equation}\label{eq3.21}
   \begin{split}
        & (R_1)_j^n = \mu_0 \left[ (u_j^n)_{xx\Bar{x}\Bar{x}} - u_{xxxx}(x_j,t_n) \right], \\
        & (R_2)_j^n = \sum\limits_{p=1}^n w_{np} \left[ (\delta_tu_j^p)_{xx\Bar{x}\Bar{x}} - (\delta_tu_j^p)_{xxxx} \right],  \\
        & (R_3)_j^n = \delta_t^2 u_j^n - u_j''(t_n), \quad  n \geq 2, \\
        & (R_4)_j^n = G\left( \int_0^1 |u_{xx}(x,t_n)|^2dx \right) \left[\delta_t u_j^n - u_j'(t_n) \right], \\
        &  (R_5)_j^n = \int_0^{t_n}K(t_n-s) u_{txxxx}(x_j,s)ds - \sum\limits_{p=1}^{n} \int_{t_{p-1}}^{t_p} K(t_n-s) (\delta_tu_j^p)_{xxxx}ds \\
        & \qquad\quad + \sum\limits_{p=1}^{n} \int_{t_{p-1}}^{t_p} K(t_n-s) (\delta_tu_j^p)_{xxxx}ds
        - \sum\limits_{p=1}^{n}w_{np}(\delta_tu_j^p)_{xxxx}ds \\
        & \qquad\quad
     := (R_{51})_j^n + (R_{52})_j^n.
   \end{split}
   \end{equation}
Then, define $\xi_j^n = u_j(t_n)-U_j^n = u_j^n - U_j^n$ for $0\leq n \leq N$ and $\Vec{\xi}^n=[\xi_1^n,\xi_2^n,\cdots,\xi_{J-1}^n]^{\top}$. By subtracting \eqref{eq3.2}-\eqref{eq3.4} from \eqref{eq3.18}-\eqref{eq3.20}, we arrive at the following error equations for $n\geq 2$,
\begin{equation}\label{eq3.22}
   \begin{split}
        \delta_t^2 \xi_j^n &+ G\left( \|\Vec{U}_{x\Bar{x}}^n\|^2 \right) \delta_t \xi_j^n  + \mu_0 (\xi_{j}^n)_{xx\Bar{x}\Bar{x}} +  \sum\limits_{p=1}^{n}w_{np} \left(\delta_t \xi_{j}^p \right)_{xx\Bar{x}\Bar{x}} \\
        & =  \left[  G\left( \|\Vec{U}_{x\Bar{x}}^n\|^2 \right) -  G\left(
        \int_0^1|u_{xx}(x,t_n)|^2 dx \right)  \right]\delta_tu_j^n + \sum\limits_{m=1}^{5}(R_m)_j^n,
   \end{split}
   \end{equation}
\begin{equation}\label{eq3.23}
   \begin{split}
        & \xi_0^n = \xi_{J}^n = 0, \quad
        (\xi_0^n)_{x\Bar{x}} = (\xi_J^n)_{x\Bar{x}} = 0, \quad  1\leq n \leq N,
   \end{split}
   \end{equation}
\begin{equation}\label{eq3.24}
   \begin{split}
        \xi_j^0 = 0, \quad \delta_t \xi_j^1 = \delta_tu_j^1 - (u_t)_j^0 = \frac{1}{\Delta t} \int_{0}^{t_1} u_{tt}(x_j,s)(t_1-s)ds, \quad 1\leq j \leq J-1.
   \end{split}
   \end{equation}
Then, we present the following convergence result.

\begin{theorem} \label{theorem3.2} Let the assumptions ($\mathbf{S1}$)-($\mathbf{S2}$) hold.
Let $\beta(t)$ be denoted by \eqref{eq1.5} or \eqref{eq1.6}, $\Vec{U}^n=[U_1^n,U_2^n,\cdots,U_{J-1}^n]^{\top}$ be the solution of the fully discrete scheme \eqref{eq3.2}-\eqref{eq3.4}, and $\Vec{u}^n=[u_1^n,u_2^n,\cdots,u_{J-1}^n]^{\top}$ be the solution of \eqref{eq3.18}-\eqref{eq3.20}, respectively. If satisfying \textbf{Assumption A},
then for $T<\infty$, it holds for $1\leq n \leq N$
\begin{equation}\label{qqq1}
 \begin{split}
    \sqrt{\|\delta_t(\Vec{u}^n-\Vec{U}^n)\|^2
       + \|\Vec{u}_{x\Bar{x}}^n -\Vec{U}_{x\Bar{x}}^n \|^2}  \leq C(T) (\Delta t + h^2).
 \end{split}
\end{equation}
In addition, if $h$ is small enough and $T<\infty$, we have
\begin{equation}\label{qqq2}
 \begin{split}
    \sqrt{\|\delta_t(\Vec{u}^n-\Vec{U}^n)\|^2
       + \|\Vec{u}^n -\Vec{U}^n \|^2}  \leq C(T) (\Delta t + h^2), \quad 1\leq n \leq N.
 \end{split}
\end{equation}
\end{theorem}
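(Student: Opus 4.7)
The plan is to mirror the semi-discrete convergence proof (Theorem \ref{theorem2.2}) at the fully-discrete level. I would begin by taking the discrete $L^2$ inner product of the error equation \eqref{eq3.22} with $\delta_t \Vec{\xi}^n$, applying the identities \eqref{eq3.7} and \eqref{eq2.4} to generate $\tfrac{1}{2}\delta_t\|\delta_t\Vec{\xi}^n\|^2$ and $\tfrac{\mu_0}{2}\delta_t\|\Vec{\xi}^n_{x\Bar{x}}\|^2$, and invoking ($\mathbf{S1}$) to produce the dissipation $g_0\|\delta_t\Vec{\xi}^n\|^2$ on the left-hand side. The $\mu_0$-weighted term on $\Vec{\xi}^n_{x\Bar{x}}$ arises from the reformulation used to derive \eqref{ModelA}; the common $K(t_n)\Vec{U}^0_{xx\Bar{x}\Bar{x}}$ source in \eqref{eq3.2} and \eqref{eq3.18} cancels on subtraction.

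The convolution sum is then treated exactly as in the stability proof: summing over $n$ and multiplying by $\Delta t$, Lemma \ref{lemma1.1} together with the positive-type quadrature argument cited near \eqref{eq3.11} gives
\begin{equation*}
\Delta t \sum_{n=1}^{N}\sum_{p=1}^{n} w_{np} \langle (\delta_t \Vec{\xi}^p)_{x\Bar{x}}, (\delta_t \Vec{\xi}^n)_{x\Bar{x}}\rangle \geq 0.
\end{equation*}
For the $G$-difference on the right-hand side of \eqref{eq3.22}, I plan to build a fully-discrete counterpart of \eqref{eq2.34} by combining the trapezoid estimate \eqref{eq2.30}, assumption ($\mathbf{S2}$), the interpolation \eqref{eq2.20}, and the uniform stability bound $\|\Vec{U}^n_{x\Bar{x}}\| \leq C$ supplied by Theorem \ref{theorem3.1}, yielding
\begin{equation*}
\bigl|G(\|\Vec{U}^n_{x\Bar{x}}\|^2) - G({\textstyle\int_0^1} |u_{xx}(x,t_n)|^2 dx)\bigr| \leq C\bigl(h^2 + \|\Vec{\xi}^n_{x\Bar{x}}\|\bigr).
\end{equation*}
Multiplying against $\|\delta_t \Vec{u}^n\| \leq C$ (from Assumption A) and $\|\delta_t \Vec{\xi}^n\|$, a Cauchy--Schwarz split absorbs $\tfrac{g_0}{2}\|\delta_t\Vec{\xi}^n\|^2$ on the left and leaves $C(h^4 + \|\Vec{\xi}^n_{x\Bar{x}}\|^2)$ on the right. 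The truncation residuals \eqref{eq3.21} are handled next: $\Vec{R}_1^n,\Vec{R}_2^n = O(h^2)$ from \eqref{eq2.20} plus $\int_0^T|\beta|dt \leq K_0$; $\Vec{R}_3^n$ and $\Vec{R}_4^n$ are $O(\Delta t)$ after writing them as integral remainders and absorbing the mild singularity $|u_{ttt}|\leq c_1 t^{\alpha-1}$ in an $\ell^2$-in-time norm via Cauchy--Schwarz, producing $\Delta t \sum_{n=2}^N \|\Vec{R}_3^n\|^2 = O((\Delta t)^2)$; the PI-quadrature error $\Vec{R}_5^n = \Vec{R}_{51}^n + \Vec{R}_{52}^n$ is the usual right-endpoint rectangle error plus the averaging error in \eqref{eq3.1}, bounded by $O(\Delta t)$ using integrability of $K$ and the $C^3$-in-time regularity in Assumption A.

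Collecting everything, summing over $n=2,\ldots,N$, multiplying by $\Delta t$, and using $\delta_t \Vec{\xi}^1 = O(\Delta t)$ from \eqref{eq3.24} with $u_{tt}\in L^\infty$, I arrive at
\begin{equation*}
\tfrac{1}{2}\|\delta_t \Vec{\xi}^N\|^2 + \tfrac{g_0}{2}\Delta t \sum_{n=2}^N \|\delta_t\Vec{\xi}^n\|^2 + \tfrac{\mu_0}{2}\|\Vec{\xi}^N_{x\Bar{x}}\|^2 \leq C(T)(\Delta t + h^2)^2 + C\Delta t \sum_{n=2}^N \|\Vec{\xi}^n_{x\Bar{x}}\|^2,
\end{equation*}
and the discrete Gr\"onwall inequality with $T<\infty$ closes \eqref{qqq1}. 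The second bound \eqref{qqq2} then follows from \eqref{qqq1} via a discrete Poincar\'e-type inequality $\|\Vec{W}\| \leq C\|\Vec{W}_{x\Bar{x}}\|$ under the hinged boundary conditions \eqref{eq3.23}, the smallness of $h$ entering only to keep the discrete embedding constant uniform. The main obstacle I anticipate is the bookkeeping of the $G$-difference contribution: the dissipation controls $\|\delta_t \Vec{\xi}^n\|$ but not $\|\delta_t \Vec{\xi}^n_{x\Bar{x}}\|$, so every nonlinear-coefficient error must be expressed in $h^2 + \|\Vec{\xi}^n_{x\Bar{x}}\|$ rather than $\|\delta_t \Vec{\xi}^n_{x\Bar{x}}\|$, which forces consistent reliance on the uniform a priori bound of Theorem \ref{theorem3.1} to legitimate the Lipschitz control on $G$ independently of $\Delta t$ and $h$.
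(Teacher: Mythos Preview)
Your plan tracks the paper's proof closely in structure: the same energy testing with $\delta_t\Vec{\xi}^n$, the identities \eqref{eq3.7} and \eqref{eq2.4}, the positivity \eqref{eq3.11}, the nonlinear-coefficient splitting leading to \eqref{eq3.27}, and the discrete Poincar\'e inequality for \eqref{qqq2}. The one substantive deviation is how you treat the truncation residuals. You square them and absorb via Cauchy--Schwarz into the damping, aiming for $\Delta t\sum_n\|\Vec{R}_m^n\|^2 = O((\Delta t+h^2)^2)$, whereas the paper introduces the combined norm $\|\Vec{\xi}^n\|_A=\bigl(\|\delta_t\Vec{\xi}^n\|^2+\|\Vec{\xi}^n_{x\Bar{x}}\|^2\bigr)^{1/2}$, picks $M$ maximizing it, divides through to obtain the \emph{linear} inequality \eqref{eq3.33}, and then needs only the $\ell^1$-in-time bound $\Delta t\sum_n\|\Vec{R}_m^n\|$.

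This matters specifically for $\Vec{R}_3^n$ in the weakly singular case $\alpha=\tfrac12$ of Assumption~A. With $|u_{ttt}|\le c_1 t^{-1/2}$ one has $\|\Vec{R}_3^n\|\le C\int_{t_{n-2}}^{t_n} s^{-1/2}\,ds$, and a direct computation gives $\Delta t\sum_{n\ge 2}\|\Vec{R}_3^n\|^2 \sim (\Delta t)^2\log(1/\Delta t)$, not $O((\Delta t)^2)$; the Cauchy--Schwarz-in-$s$ variant you allude to would require $u_{ttt}\in L^2(0,T)$, which $t^{-1/2}$ just fails. So your squared-energy route loses a $\sqrt{\log(1/\Delta t)}$ factor in \eqref{qqq1}. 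The paper's $\ell^1$ route needs only $u_{ttt}\in L^1(0,T)$, which $t^{\alpha-1}$ provides for both $\alpha=\tfrac12,1$, and gives the clean $O(\Delta t)$ in \eqref{eq3.36}. The fix is minimal: rather than absorbing $\langle\Vec{R}_3^n,\delta_t\Vec{\xi}^n\rangle$ into the dissipation, pull out $\max_n\|\Vec{\xi}^n\|_A$, sum linearly, and close with Gr\"onwall at the linear level as the paper does in \eqref{eq3.33}. Everything else in your outline goes through as written.
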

\begin{proof} First, taking the inner product of \eqref{eq3.22} with $\delta_t\Vec{\xi}^n$, then applying the assumption ($\mathbf{S1}$) and using \eqref{eq2.4} and \eqref{eq3.7}, we obtain
\begin{equation}\label{eq3.25}
   \begin{split}
       \frac{1}{2}\delta_t \|\delta_t \Vec{\xi}^n\|^2 &+ g_0 \|\delta_t \Vec{\xi}^n\|^2  + \frac{\mu_0}{2}\delta_t\|\Vec{\xi}^n_{x\Bar{x}}\|^2+ \sum\limits_{p=1}^{n}w_{np}\langle \delta_t\Vec{\xi}^p_{x\Bar{x}}, \delta_t\Vec{\xi}^n_{x\Bar{x}} \rangle \\
        & \leq   \left[  G\left( \|\Vec{U}_{x\Bar{x}}^n\|^2 \right) -  G\left(
        \int_0^1|u_{xx}(x,t_n)|^2 dx \right)  \right] \langle\delta_t\Vec{u}^n, \delta_t\Vec{\xi}^n\rangle \\
        & + \sum\limits_{m=1}^{5}\langle(\Vec{R}_m)^n, \delta_t\Vec{\xi}^n\rangle, \quad n\geq 2.
   \end{split}
   \end{equation}
First, we estimate the first term of the right-hand side of \eqref{eq3.25}. Similar to the analysis of \eqref{eq2.33}, then we have
\begin{equation}\label{eq3.26}
   \begin{split}
         \left| G\left(\|\Vec{u}_{x\Bar{x}}(t_n)\|^2 \right) - G\left(\|\Vec{U}_{x\Bar{x}}^n\|^2 \right) \right|  & \leq   C\|\Vec{\xi}_{x\Bar{x}}^n\|.
   \end{split}
   \end{equation}
Then, based on \eqref{eq2.30}-\eqref{eq2.32} and \eqref{eq3.26}, we get
\begin{equation}\label{eq3.27}
   \begin{split}
         \left|  G\left( \|\Vec{U}_{x\Bar{x}}^n\|^2 \right) -  G\left(
        \int_0^1|u_{xx}(x,t_n)|^2 dx \right) \right| \leq C\left( h^2 + \|\Vec{\xi}_{x\Bar{x}}^n\| \right).
   \end{split}
   \end{equation}
Now, summing \eqref{eq3.25} regarding $n$ from 2 to $N$, then multiplying $\Delta t$ and applying the Cauchy-Schwarz inequality, we obtain by \eqref{eq3.11},
\begin{equation}\label{eq3.28}
   \begin{split}
       & \frac{1}{2}  \|\delta_t\Vec{\xi}^N\|^2 + g_0 \Delta t\sum\limits_{n=2}^{N} \|\delta_t \Vec{\xi}^n\|^2  + \frac{\mu_0}{2}\|\Vec{\xi}^N_{x\Bar{x}}\|^2 \\
        & \leq   C\Delta t\sum\limits_{n=2}^{N}\left( h^2 + \|\Vec{\xi}_{x\Bar{x}}^n\| \right)  \|\delta_t\Vec{u}^n\|\| \delta_t\Vec{\xi}^n\| + \Delta t\sum\limits_{n=2}^{N}\sum\limits_{m=1}^{5}\|(\Vec{R}_m)^n \|\| \delta_t\Vec{\xi}^n\| \\
        & + \frac{1}{2}  \|\delta_t\Vec{\xi}^1\|^2+ \frac{\mu_0}{2}\|\Vec{\xi}^1_{x\Bar{x}}\|^2 +
        (\Delta t)w_{11}\|\delta_t\Vec{\xi}^1_{x\Bar{x}}\|^2.
   \end{split}
   \end{equation}
Using \textbf{Assumption A} and Taylor expansion with integral remainder, we have
\begin{equation}\label{eq3.29}
   \begin{split}
        \left|\delta_tu^n_j\right| =
        \left|u_t(x_j,t_n) + \frac{1}{\Delta t}\int_{t_{n-1}}^{t_n}u_{tt}(x_j,s)(t_{n-1}-s)ds \right| \leq C
        \Rightarrow \|\delta_t \Vec{u}^n\| \leq C.
   \end{split}
   \end{equation}
Combining \eqref{eq3.28} and \eqref{eq3.29}, then we get
\begin{equation}\label{eq3.30}
   \begin{split}
        \frac{\mu_0}{2}  \|\delta_t\Vec{\xi}^N\|^2& + \frac{\mu_0}{2}\|\Vec{\xi}^N_{x\Bar{x}}\|^2  \leq   C\Delta t\sum\limits_{n=2}^{N}\left( h^2 + \|\Vec{\xi}_{x\Bar{x}}^n\| \right)  \| \delta_t\Vec{\xi}^n\| \\
        & + \Delta t\sum\limits_{n=2}^{N}\sum\limits_{m=1}^{5}\|(\Vec{R}_m)^n \|\| \delta_t\Vec{\xi}^n\| + \frac{1}{2}  \left(\|\delta_t\Vec{\xi}^1\|^2+\|\Vec{\xi}^1_{x\Bar{x}}\|^2 \right).
   \end{split}
   \end{equation}
Denote a new norm
\begin{equation}\label{eq3.31}
   \begin{split}
        \|\Vec{\xi}^n\|_A : = \sqrt{\|\delta_t\Vec{\xi}^n\|^2+\|\Vec{\xi}^n_{x\Bar{x}}\|^2}, \quad 1\leq n \leq N.
   \end{split}
   \end{equation}
Then, we rewrite \eqref{eq3.30} as
\begin{equation*}
   \begin{split}
        \|\Vec{\xi}^N\|_A^2
             & \leq   C\Delta t\sum\limits_{n=2}^{N}\left( h^2 + \|\Vec{\xi}^n\|_A \right)  \|\Vec{\xi}^n\|_A \\
        & + \frac{2}{\mu_0}\Delta t\sum\limits_{n=2}^{N}\sum\limits_{m=1}^{5}\|(\Vec{R}_m)^n \|\|\Vec{\xi}^n\|_A + \frac{1}{\mu_0}  \|\Vec{\xi}^1\|_A^2, \quad N\geq 1.
   \end{split}
   \end{equation*}
By taking an appropriate $M$ so that $\|\Vec{\xi}^M\|_A=\max\limits_{1\leq n \leq N}\|\Vec{\xi}^n\|_A$, we have
\begin{equation}\label{eq3.33}
   \begin{split}
        \|\Vec{\xi}^M\|_A
             & \leq   C\Delta t\sum\limits_{n=2}^{M}\left( h^2 + \|\Vec{\xi}^n\|_A \right)  + \frac{2}{\mu_0}\Delta t\sum\limits_{n=2}^{M}\sum\limits_{m=1}^{5}\|(\Vec{R}_m)^n \| + \frac{1}{\mu_0}  \|\Vec{\xi}^1\|_A \\
             & \leq  C\Delta t\sum\limits_{n=2}^{N}\left( h^2 + \|\Vec{\xi}^n\|_A \right)  + \frac{2}{\mu_0} \Delta t\sum\limits_{n=2}^{N}\sum\limits_{m=1}^{5}\|(\Vec{R}_m)^n \| + \frac{1}{\mu_0}  \|\Vec{\xi}^1\|_A.
   \end{split}
   \end{equation}
Subsequently, we apply \eqref{eq3.24}, \textbf{Assumption A} and $\Vec{\xi}^0=0$ to obtain
\begin{equation*}
   \begin{split}
         \xi_j^1 = \int_0^{t_1}u_{tt}(x_j,s)(t_1-s)ds \Rightarrow \|\delta_t\Vec{\xi}^1\| \leq \Delta t \max\limits_{0\leq s \leq \Delta t}\|\Vec{u}_{tt}(s)\|,
   \end{split}
   \end{equation*}
thus, if $|u_{ttxxxx}(x,t)|\leq C$, we can similarly get
\begin{equation}\label{eq3.34}
   \begin{split}
         \|\Vec{\xi}^1_{x\Bar{x}}\| \leq (\Delta t)^2 \max\limits_{0\leq s \leq \Delta t}\|(\Vec{u}_{tt}(s))_{x\Bar{x}}\| \leq C(\Delta t)^2 \Rightarrow  \|\Vec{\xi}^1\|_A \leq C\Delta t.
   \end{split}
   \end{equation}
Furthermore, from \eqref{eq3.21}, with $|u_{txxxxxx}(x,t)|\leq C$, we have
\begin{equation}\label{eq3.35}
   \begin{split}
          \Delta t\sum\limits_{n=2}^{N}\sum\limits_{m=1}^{2}\|(\Vec{R}_m)^n \| \leq C(T)\left(C+ \int_0^T\beta(t)dt \right)h^2.
   \end{split}
   \end{equation}
Then, use \textbf{Assumption A} and Taylor expansion with the integral remainder to get
\begin{equation}\label{eq3.36}
   \begin{split}
          \Delta t\sum\limits_{n=2}^{N}\|(\Vec{R}_3)^n \|
          \leq \Delta t\sum\limits_{n=2}^{N} \int_{t_{n-2}}^{t_n}\|u_{ttt}(s)\|ds = \Delta t\int_{0}^{t_N}\|u_{ttt}(s)\|ds \leq C\Delta t.
   \end{split}
   \end{equation}
Employing the assumption ($\mathbf{S1}$) and \textbf{Assumption A} to yield
\begin{equation}\label{eq3.37}
   \begin{split}
          \Delta t\sum\limits_{n=2}^{N}\|(\Vec{R}_4)^n \|
          \leq g_1\Delta t\sum\limits_{n=2}^{N} \int_{t_{n-1}}^{t_n}\|u_{tt}(s)\|ds = g_1 \Delta t\int_{\Delta t}^{t_N}\|u_{tt}(s)\|ds \leq C\Delta t.
   \end{split}
   \end{equation}
Then for $(\Vec{R}_5)^n $, noting that
\begin{equation*}
   \begin{split}
    |(R_{51})^n_j| & = \left| \sum\limits_{p=1}^n \left\{ \int_{t_{p-1}}^{t_p}K(t_n-s)\left[ (u_t(x_j,s))_{xxxx} - (\delta_t u_j^p)_{xxxx}  \right] ds\right\} \right| \\
   & \leq \sum\limits_{p=1}^n \left\{ \int_{t_{p-1}}^{t_p}K(t_n-s)\left[ \int_{t_{p-1}}^{t_p}|u_{ttxxxx}(x_j,\theta)|d\theta  \right] ds\right\},
   \end{split}
   \end{equation*}
thence with $K(t)\leq C_0$, if $|u_{ttxxxx}(x,t)|\leq C$, we further obtain that
\begin{equation}\label{eq3.38}
   \begin{split}
          \Delta t\sum\limits_{n=2}^{N}\|(\Vec{R}_{51})^n \|
          & \leq C(T) \sup\limits_{0\leq t \leq T}\|u_{ttxxxx}(t)\| \Delta t.
   \end{split}
   \end{equation}
Besides, we rewrite $(\Vec{R}_{52})^n$ as
\begin{equation*}
   \begin{split}
    (R_{52})^n_j  = \frac{1}{\Delta t}\int_{t_{n-1}}^{t_n} \sum\limits_{p=1}^n  (\delta_t u_j^p)_{xxxx} \left[ \int_{t_{p-1}}^{t_p}K(t_n-s)ds - \int_{t_{p-1}}^{\min(t, t_p)}K(t-s)ds \right]dt,
   \end{split}
   \end{equation*}
where $|(\delta_t u_j^p)_{xxxx}|\leq \sup\limits_{0\leq t \leq T}\|u_{ttxxxx}(t)\|$. Thus, for $1\leq p \leq n-1$ and $t\in (t_{n-1},t_n)$, we have
\begin{equation*}
   \begin{split}
      \left| \int_{t_{p-1}}^{t_p}[K(t_n-s)-K(t-s)]ds \right| &\leq \int_{t_{p-1}}^{t_p} \int_t^{t_n}|K'(\vartheta-s)|d\vartheta ds \\
      & = \int_{t_{p-1}}^{t_p} \int_t^{t_n}|\beta (\vartheta-s)|d\vartheta ds,
   \end{split}
   \end{equation*}
which implies that
\begin{equation*}
   \begin{split}
      \sum\limits_{p=1}^{n-1}  \left| \int_{t_{p-1}}^{t_p}|K(t_n-s)-K(t-s)|ds \right| &\leq C \int_t^{t_n} \int_{0}^{t_{n-1}} (\vartheta-s)^{\alpha-1}ds d\vartheta \\
      & \leq C\int_t^{t_n} \left[ \vartheta^{\alpha} - (\vartheta-t_{n-1})^{\alpha} \right]  d\vartheta  \leq C \Delta t.
   \end{split}
   \end{equation*}
Also, for $p=n$ and $t\in [t_{n-1},t_n]$, we have
\begin{equation*}
   \begin{split}
     \left| \int_{t_{n-1}}^{t_n}K(t_n-s)ds - \int_{t_{n-1}}^{t}K(t-s)ds \right| \leq C \Delta t.
   \end{split}
   \end{equation*}
Then, following from above analyses,
\begin{equation}\label{eq3.38.1}
   \begin{split}
          \Delta t\sum\limits_{n=2}^{N}\|(\Vec{R}_{52})^n \|
          & \leq C(T) \sup\limits_{0\leq t \leq T}\|u_{ttxxxx}(t)\| \Delta t.
   \end{split}
   \end{equation}
Substituting \eqref{eq3.34}, \eqref{eq3.35}, \eqref{eq3.36}, \eqref{eq3.37}, \eqref{eq3.38} and \eqref{eq3.38.1} into \eqref{eq3.33}, we have
\begin{equation*}
   \begin{split}
        \|\Vec{\xi}^N\|_A
             & \leq   C\Delta t\sum\limits_{n=2}^{N}\left( h^2 + \|\Vec{\xi}^n\|_A \right)  +  C(T)(\Delta t + h^2).
   \end{split}
   \end{equation*}
Then, the discrete Gr\"{o}nwall's lemma yields \eqref{qqq1}.

\vskip 1mm
    Noting that Theorem \ref{theorem3.2} shows the convergence in the $H^2$ norm of the fully discrete difference scheme. However, we want to yield the convergence in the $L^2$ norm. Following from \cite[Lemma 2]{Omrani} and \cite[Lemma 4.7]{Xu3}, we have
\begin{equation*}
   \begin{split}
         \frac{\sin(\pi h/2)}{h/2}\|\Vec{U}_{\Bar{x}}\| \leq \|\Vec{U}_{x\Bar{x}}\|, \quad \text{if} \quad U_0 = U_J = 0.
   \end{split}
   \end{equation*}
Then, from \cite[Lemma 2]{Omrani}, we get the discrete Poincar\'{e} inequality
\begin{equation*}
   \begin{split}
         \frac{\sin(\pi h/2)}{h/2}\|\Vec{U}\| \leq \|\Vec{U}_{\Bar{x}}\|, \quad \text{if} \quad U_0 = U_J = 0.
   \end{split}
   \end{equation*}
Thus, when $h$ is small enough, we have $\frac{1}{\pi^2}\|\Vec{U}\| \leq \|\Vec{U}_{x\Bar{x}}\|$, which combines \eqref{qqq1} to yield \eqref{qqq2}. This finishes the proof.
\end{proof}

\subsection{Uniqueness of numerical solutions}

Here, the uniqueness of numerical solutions of \eqref{eq3.2} will be deduced. We give the following theorem.

\vskip 1mm
\begin{theorem}\label{theorem3.4} Let the conditions in Theorem \ref{theorem3.3} hold.
    If $T<\infty$ and $\Delta t$ small enough, then the fully discrete difference scheme \eqref{eq3.2}-\eqref{eq3.4} possesses a unique solution.
\end{theorem}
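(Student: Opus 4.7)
The plan is to prove uniqueness by induction on the time level $n$. For $n=0,1$ the values $\Vec{U}^0$ and $\Vec{U}^1$ are determined explicitly by the initial conditions \eqref{eq3.4}--\eqref{eq3.5}, so uniqueness is automatic. Suppose two solutions $\Vec{U}^n$ and $\Vec{\tilde{U}}^n$ of \eqref{eq3.2}--\eqref{eq3.4} coincide for $0\leq k \leq n-1$, and set $\Vec{\eta}^n = \Vec{U}^n - \Vec{\tilde{U}}^n$. I would subtract the two copies of \eqref{eq3.2} at level $n$. Because $\Vec{\eta}^{n-1} = \Vec{\eta}^{n-2} = \Vec{0}$, the history terms cancel and the resulting equation collapses to
\begin{equation*}
\frac{\Vec{\eta}^n}{(\Delta t)^2} + G\!\left(\|\Vec{U}^n_{x\Bar{x}}\|^2\right)\frac{\Vec{\eta}^n}{\Delta t} + \mu_0 (\Vec{\eta}^n)_{xx\Bar{x}\Bar{x}} + \frac{w_{nn}}{\Delta t}(\Vec{\eta}^n)_{xx\Bar{x}\Bar{x}} = \left[G\!\left(\|\Vec{\tilde{U}}^n_{x\Bar{x}}\|^2\right) - G\!\left(\|\Vec{U}^n_{x\Bar{x}}\|^2\right)\right]\delta_t\Vec{\tilde{U}}^n.
\end{equation*}

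Next I would take the discrete $L^2$ inner product with $\Vec{\eta}^n$, use the positivity relation \eqref{eq2.4}, the assumption $(\mathbf{S1})$ (which makes the two $G$-coefficient contributions on the left non-negative), and the positivity $w_{nn}>0$. This yields a lower bound of the form $\frac{1}{(\Delta t)^2}\|\Vec{\eta}^n\|^2 + \mu_0\|\Vec{\eta}^n_{x\Bar{x}}\|^2$. For the right-hand side I would invoke $(\mathbf{S2})$ together with the identity
\begin{equation*}
\left|G\!\left(\|\Vec{U}^n_{x\Bar{x}}\|^2\right) - G\!\left(\|\Vec{\tilde{U}}^n_{x\Bar{x}}\|^2\right)\right| \leq L\left(\|\Vec{U}^n_{x\Bar{x}}\| + \|\Vec{\tilde{U}}^n_{x\Bar{x}}\|\right)\|\Vec{\eta}^n_{x\Bar{x}}\|,
\end{equation*}
applied in the style of \eqref{eq2.33}. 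The long-time stability (Theorem \ref{theorem3.1}), applied to both $\Vec{U}^n$ and $\Vec{\tilde{U}}^n$, furnishes uniform bounds on $\|\Vec{U}^n_{x\Bar{x}}\|$, $\|\Vec{\tilde{U}}^n_{x\Bar{x}}\|$, and on $\|\delta_t\Vec{\tilde{U}}^n\|$, so the right-hand side is controlled by $C\|\Vec{\eta}^n_{x\Bar{x}}\|\,\|\Vec{\eta}^n\|$ with $C=C(T)$ independent of $n$. A Young's inequality with a coefficient that absorbs $\|\Vec{\eta}^n_{x\Bar{x}}\|^2$ into $\mu_0\|\Vec{\eta}^n_{x\Bar{x}}\|^2$ then gives
\begin{equation*}
\frac{1}{(\Delta t)^2}\|\Vec{\eta}^n\|^2 + \frac{\mu_0}{2}\|\Vec{\eta}^n_{x\Bar{x}}\|^2 \leq \frac{C^2}{2\mu_0}\|\Vec{\eta}^n\|^2.
\end{equation*}

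Choosing $\Delta t$ small enough so that $1/(\Delta t)^2 > C^2/(2\mu_0)$ forces $\Vec{\eta}^n = \Vec{0}$, closing the induction and establishing uniqueness. The main obstacle is the nonlinear term: it is the only piece coupling the two candidate solutions nonlinearly, and it has to be split into a purely $\Vec{\eta}^n$-dependent factor (controlled via $(\mathbf{S2})$) multiplied by a bounded velocity factor, where the boundedness comes from Theorem \ref{theorem3.1}. Once that splitting is in place, the strong coercive term $(\Delta t)^{-2}\|\Vec{\eta}^n\|^2$ inherited from $\delta_t^2$ dominates any lower-order perturbation for sufficiently small $\Delta t$, and the conclusion follows without any smallness condition on $h$.
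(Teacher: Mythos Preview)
Your proof is correct and follows essentially the same approach as the paper: both use induction on the time level, subtract the two copies of \eqref{eq3.2}, test against the error, exploit coercivity from the $\delta_t^2$ and biharmonic terms, and control the nonlinear difference via $(\mathbf{S2})$ together with the stability bounds of Theorem~\ref{theorem3.1}. The only cosmetic difference is that the paper tests against $\delta_t\Vec{\xi}_*^n$, sums over $n$, and invokes the discrete Gr\"onwall lemma, whereas you test against $\Vec{\eta}^n$ directly and close the argument at a single time level---under the induction hypothesis these are equivalent since $\delta_t\Vec{\eta}^n=\Vec{\eta}^n/\Delta t$.
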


\begin{proof} Let $\Vec{U}^n\in \mathbb{R}^{J-1}$ and $\Vec{U}_{*}^n\in \mathbb{R}^{J-1}$ be the solutions of \eqref{eq3.2}, respectively, which satisfy $\Vec{U}_{*}^p = \Vec{U}^p$, $p=0,1$. Then, we assume that $\Vec{U}_{*}^m = \Vec{U}^m$ holds for $m=0,1,\cdots,N-1$. Below, we only need to prove that $\Vec{U}_{*}^N = \Vec{U}^N$ for \eqref{eq3.2}.

Define $\Vec{\xi}_{*}^n=\Vec{U}^n-\Vec{U}_{*}^n$ for $n=0,1,\cdots,N$. Similar to \eqref{eq3.22}, we also have
\begin{equation}\label{eq3.41}
   \begin{split}
        \delta_t^2 \Vec{\xi}_{*}^n &+ G\left( \|\Vec{U}_{x\Bar{x}}^n\|^2 \right) \delta_t \Vec{\xi}_{*}^n  + \mu_0(\Vec{\xi}_{*}^n)_{xx\Bar{x}\Bar{x}}
         + \sum\limits_{p=1}^{n}w_{np} (\Vec{\xi}_{*}^n )_{xx\Bar{x}\Bar{x}} \\
        & =  \left[  G\left( \|\Vec{U}_{*,x\Bar{x}}^n\|^2 \right) -  G\left(
        \|\Vec{U}_{x\Bar{x}}^n\|^2  \right)  \right]\delta_t \Vec{U}_*^n.
   \end{split}
   \end{equation}
Taking the inner product of \eqref{eq3.41} with $\delta_t\Vec{\xi}_*^n$, then using the assumption ($\mathbf{S1}$), \eqref{eq2.4} and \eqref{eq3.7}, we get
\begin{equation}\label{eq3.42}
   \begin{split}
       \frac{1}{2}\delta_t \|\delta_t \Vec{\xi}_*^n\|^2 &+ g_0 \|\delta_t \Vec{\xi}_*^n\|^2  + \frac{\mu_0}{2}\delta_t\|\Vec{\xi}^n_{*,x\Bar{x}}\|^2+ \sum\limits_{p=1}^{n}w_{np}\langle \delta_t\Vec{\xi}^p_{*,x\Bar{x}}, \delta_t\Vec{\xi}^n_{*,x\Bar{x}} \rangle \\
        & \leq   \left[  G\left( \|\Vec{U}_{*,x\Bar{x}}^n\|^2 \right) -  G\left(
       \|\Vec{U}_{x\Bar{x}}^n\|^2  \right)  \right] \langle\delta_t\Vec{U}_*^n, \delta_t\Vec{\xi}_*^n\rangle.
   \end{split}
   \end{equation}
Integrating \eqref{eq3.42} on $n$ from 2 to $N$, then multiplying $\Delta t$ and noting $\Vec{\xi}_{*}^0=\Vec{\xi}_{*}^1=0$,
\begin{equation}\label{eq3.43}
   \begin{split}
       \frac{1}{2} \|\delta_t \Vec{\xi}_*^N\|^2 & + \frac{\mu_0}{2}\|\Vec{\xi}^N_{*,x\Bar{x}}\|^2+\Delta t\sum\limits_{n=1}^{N}\sum\limits_{p=1}^{n}w_{np}\langle \delta_t\Vec{\xi}^p_{*,x\Bar{x}}, \delta_t\Vec{\xi}^n_{*,x\Bar{x}} \rangle \\
        & \leq \Delta t \sum\limits_{n=2}^{N}   \left|  G\left( \|\Vec{U}_{*,x\Bar{x}}^n\|^2 \right) -  G\left(
       \|\Vec{U}_{x\Bar{x}}^n\|^2  \right)  \right| \|\delta_t\Vec{U}_*^n\|\| \delta_t\Vec{\xi}_*^n\| : = \Theta(\Vec{U},\Vec{\xi}).
   \end{split}
   \end{equation}
Then, using Theorem \ref{theorem3.1}, we have $\|\delta_t\Vec{U}_*^n\|\leq C$, and applying the assumption ($\mathbf{S2}$), we get that
   \begin{equation}\label{eq3.44}
   \begin{split}
       \Theta(\Vec{U},\Vec{\xi})
       \leq C \Delta t \sum\limits_{n=2}^{N}\| \Vec{\xi}_{*,x\Bar{x}}^n\| \| \delta_t\Vec{\xi}_*^n\|  \leq \frac{C}{2}\Delta t \sum\limits_{n=2}^{N} \| \delta_t\Vec{\xi}_*^n \|^2 +  \frac{C}{2}\Delta t \sum\limits_{n=2}^{N} \| \Vec{\xi}_{*,x\Bar{x}}^n\|^2.
   \end{split}
   \end{equation}
Putting \eqref{eq3.44} into \eqref{eq3.43}, and then applying the discrete Gr\"{o}nwall's lemma, we obtain $\|\Vec{\xi}_{*}^N\|^2 \leq 0$ with $T<\infty$. The proof is finished.
\end{proof}

\section{Numerical simulation}\label{sec5}

In this section, we give two numerical examples to validate the effectiveness of the fully discrete difference scheme \eqref{eq3.2}-\eqref{eq3.4} and the correctness of the theoretical analysis. Since the scheme is implicit and nonlinear, we below compute and implement it by a fixed-point iterative algorithm, see \cite{Qiu}.

Let $U_j^n$ be the numerical solution, and then, with the unknown exact solutions, we denote the corresponding errors and the time convergence orders
\begin{equation*}
 E_{2}(\Delta t,h) = \sqrt{ h\sum_{j=1}^{J-1} \left|U_j^N(\Delta t,h)-U_j^{2N}(\Delta t /2,h)\right|^2 }, \quad {\rm rate}^t = \log_{2} \left(\frac{E_{2}(2\Delta t,h)}{E_{2}(\Delta t,h)}\right),
\end{equation*}
and denote the corresponding  errors and the space convergence orders
\begin{equation*}
 F_{2}(\Delta t,h) = \sqrt{ h\sum_{j=1}^{J-1}  \left|U_j^N(\Delta t,h)-U_{2j}^{N}(\Delta t,h/2)\right|^2}, \quad {\rm rate}^x = \log_{2} \left(\frac{F_{2}(\Delta t,2h)}{F_{2}(\Delta t,h)}\right).
\end{equation*}

\vskip 1mm
\textbf{Example 1.} In the first example, we select $T=1$, $G(v)=1+v$ and $\beta(t)$ as \eqref{eq1.5}. Let $u_0(x)=\sin(\pi x)$, $u_1(x)=\sin(2 \pi x)$ and $f(x,t)=e^{-\sigma t}t^{\alpha}\sin(\pi x)$ with the zero boundary conditions.

\vskip 1mm
First, in Table \ref{tab1} we fix the parameters $\sigma=\frac{6}{5}$, $\alpha=\frac{1}{2}$ and $J=32$, then test the errors and convergence orders in the time direction with different $\gamma$, from the results of Table \ref{tab1}, we see that the scheme possesses first-order accuracy in time. Then, fixing $\sigma=2$ and $N=64$ in Table \ref{tab2}, we test the errors and convergence orders in space, and the results show that the scheme has spatial second-order accuracy. These are consistent with the theoretical analysis, see Theorems \ref{theorem2.2} and \ref{theorem3.2}.

\begin{table}
    \center \footnotesize
    \caption{Errors and convergence orders in time by fixing $\sigma=\frac{6}{5}$, $\alpha=\frac{1}{2}$ and $J=32$ for Example 1.} \label{tab1}
 %   \resizebox{\textwidth}{!}{
    \begin{tabular}{cccccccccc}
      \toprule
    & \multicolumn{2}{c}{$\gamma=0$} & &\multicolumn{2}{c}{$\gamma=0.5$}&
     &\multicolumn{2}{c}{$\gamma=1$}\\
   \cmidrule{2-3}  \cmidrule{5-6} \cmidrule{8-9}
       $N$  & $E_{2}(\Delta t,h)$ & ${\rm rate}^t$  &  & $E_{2}(\Delta t,h)$ & ${\rm rate}^t$ &  & $E_{2}(\Delta t,h)$ & ${\rm rate}^t$\\
      \midrule
        16   &  $6.8176 \times 10^{-2}$  &    *   &  &  $6.9243 \times 10^{-2}$  &    *    &  & $7.1339 \times 10^{-2}$  &    *    \\
        32   &  $3.7830 \times 10^{-2}$  &  0.85  &  &  $3.8277 \times 10^{-2}$  &  0.86   &  & $3.8885 \times 10^{-2}$  &  0.88 \\
        64   &  $1.9723 \times 10^{-2}$  &  0.94  &  &  $1.9834 \times 10^{-2}$  &  0.95   &  & $1.9783 \times 10^{-2}$  &  0.97 \\
        128  &  $1.0068 \times 10^{-2}$  &  0.97  &  &  $1.0070 \times 10^{-2}$  &  0.98   &  & $9.8847 \times 10^{-3}$  &  1.00 \\
        256  &  $5.1218 \times 10^{-3}$  &  0.98  &  &  $5.1033 \times 10^{-3}$  &  0.98   &  & $4.9532 \times 10^{-3}$  &  1.00 \\
      \bottomrule
    \end{tabular}
    % }
\end{table}

\begin{table}
    \center \footnotesize
    \caption{Errors and convergence orders in space by fixing $\sigma=2$ and $N=64$ for Example 1.} \label{tab2}
     \resizebox{\textwidth}{!}{
   \begin{tabular}{ccccccccccc}
      \toprule
    && & \multicolumn{2}{c}{$\gamma=0$} & &\multicolumn{2}{c}{$\gamma=1$}&
     &\multicolumn{2}{c}{$\gamma=2$}\\
   \cmidrule{4-5}  \cmidrule{7-8} \cmidrule{10-11}
      $\alpha=\frac{1}{2}$ &$J$ & & $F_{2}(\Delta t,h)$ & ${\rm rate}^x$  & & $F_{2}(\Delta t,h)$ & ${\rm rate}^x$ & & $F_{2}(\Delta t,h)$ & ${\rm rate}^x$\\
      \midrule
       & 8  & &  $4.6196 \times 10^{-4}$  &    *    &  &  $5.9530 \times 10^{-4}$  &    *   &  &  $1.3001 \times 10^{-3}$  &    *    \\
       & 16 & &  $1.0977 \times 10^{-4}$  &  2.07   &  &  $1.4746 \times 10^{-4}$  &  2.01  &  &  $3.3723 \times 10^{-4}$  &  1.95 \\
       & 32 & &  $2.6967 \times 10^{-5}$  &  2.03   &  &  $3.6609 \times 10^{-5}$  &  2.01  &  &  $8.4756 \times 10^{-5}$  &  1.99 \\
       & 64 & &  $6.7104 \times 10^{-6}$  &  2.01   &  &  $9.1337 \times 10^{-6}$  &  2.00  &  &  $2.1212 \times 10^{-5}$  &  2.00 \\
       \midrule
       && & \multicolumn{2}{c}{$\gamma=0$} & &\multicolumn{2}{c}{$\gamma=1$}&
     &\multicolumn{2}{c}{$\gamma=2$}\\
   \cmidrule{4-5}  \cmidrule{7-8} \cmidrule{10-11}
     $\alpha=1$ &$J$ & & $F_{2}(\Delta t,h)$ & ${\rm rate}^x$  & & $F_{2}(\Delta t,h)$ & ${\rm rate}^x$ & & $F_{2}(\Delta t,h)$ & ${\rm rate}^x$\\
      \midrule
       & 8  & &  $8.5616 \times 10^{-3}$  &    *    &  &  $8.3152 \times 10^{-3}$  &    *   &  &  $7.6480 \times 10^{-3}$  &    *    \\
       & 16 & &  $2.3280 \times 10^{-3}$  &  1.89   &  &  $2.2836 \times 10^{-3}$  &  1.86  &  &  $2.1632 \times 10^{-3}$  &  1.82 \\
       & 32 & &  $5.9075 \times 10^{-4}$  &  1.98   &  &  $5.8071 \times 10^{-4}$  &  1.98  &  &  $5.5346 \times 10^{-4}$  &  1.97 \\
       & 64 & &  $1.4819 \times 10^{-4}$  &  2.00   &  &  $1.4574 \times 10^{-4}$  &  1.99  &  &  $1.3911 \times 10^{-4}$  &  1.99 \\
      \bottomrule
    \end{tabular}
    }
\end{table}

\vskip 1mm
\textbf{Example 2.} Here we choose $T=1$, $G(v)=\sqrt{1+v}$ and $\beta(t)$ as \eqref{eq1.6}. Let $u_0(x)=x^2(1-x)^2$, $u_1(x)=x^3(1-x)^3$ and $f(x,t)=0$ with zero boundary conditions.

\vskip 1mm
In Table \ref{tab3} we fix the parameters $\alpha=\frac{1}{2}$ and $J=64$, then test the errors and temporal convergence orders with different $\sigma$, from numerical results of Table \ref{tab3}, we observe that the proposed scheme can reach first-order accuracy in the time direction. Subsequently, by fixing $N=64$, Table \ref{tab4} lists the errors and spatial convergence orders with different $\alpha$ and $\sigma$, and then the results illustrate that the scheme approximates spatial second order. These results with the rate $O(\Delta t + h^2)$ are in accordance with the theory.

\begin{table}
    \center \footnotesize
    \caption{Errors and temporal convergence orders by fixing $\alpha=\frac{1}{2}$ and $J=64$ for Example 2.} \label{tab3}
    \begin{tabular}{cccccccccccc}
      \toprule
     & \multicolumn{2}{c}{$\sigma=1.5$} & &\multicolumn{2}{c}{$\sigma=2$}& \\
     \cmidrule{2-3}  \cmidrule{5-6}
      $N$ & $E_{2}(\Delta t,h)$ & ${\rm rate}^t$ & $N$ & $E_{2}(\Delta t,h)$ & ${\rm rate}^t$ &   \\
      \midrule
       128  &  $1.5816 \times 10^{-3}$   &  *      & 128   &  $1.4340 \times 10^{-3}$  &  *      \\
       256  &  $9.9110 \times 10^{-4}$   &  0.80   & 256   &  $8.5286 \times 10^{-4}$  &  0.75    \\
       512  &  $4.8755 \times 10^{-4}$   &  0.90   &512    &  $4.6378 \times 10^{-4}$  &  0.88    \\
       1024 &  $2.5027 \times 10^{-4}$   &  0.96   &1024   &  $2.3987 \times 10^{-4}$  &  0.95    \\
     \midrule
     & \multicolumn{2}{c}{$\sigma=2.5$} & &\multicolumn{2}{c}{$\sigma=3$}& \\
     \cmidrule{2-3}  \cmidrule{5-6}
      $N$ & $E_{2}(\Delta t,h)$ & ${\rm rate}^t$  & $N$ & $E_{2}(\Delta t,h)$ & ${\rm rate}^t$ &   \\
      \midrule
       128  &  $1.3409 \times 10^{-3}$   &  *      & 128   &  $1.2944 \times 10^{-3}$  &  *      \\
       256  &  $8.2030 \times 10^{-4}$   &  0.71   & 256   &  $8.1102 \times 10^{-4}$  &  0.67    \\
       512  &  $4.5227 \times 10^{-4}$   &  0.86   &512    &  $4.5221 \times 10^{-4}$  &  0.84    \\
       1024 &  $2.3552 \times 10^{-4}$   &  0.94   &1024   &  $2.3682 \times 10^{-4}$  &  0.93    \\
      \bottomrule
    \end{tabular}
\end{table}

 \begin{table}
 \centering \footnotesize
     \caption{Errors and spatial convergence orders when $N=64$ with different $\sigma$ and $\alpha$ for Example 2.}
     \label{tab4}  % Give a unique label
 \begin{tabular}{cccccccccccc}
    \toprule
             \multirow{2}{*}{$\sigma$}& \multirow{2}{*}{$J$} &\multicolumn{2}{c}{$\alpha=0.3$} &&  \multicolumn{2}{c}{$\alpha=0.7$}      \\
              \cmidrule{3-4} \cmidrule{6-7}
     &                 &  $F_{2}(\Delta t,h)$    & ${\rm rate}^x$    & & $F_{2}(\Delta t,h)$    & ${\rm rate}^x$    \\
      \midrule
   & 16   &  $1.1603 \times 10^{-4}$   &  *          & &  $1.4705 \times 10^{-4}$    &  *            \\
$\sigma=1.5$
   & 32   &  $3.0157 \times 10^{-5}$   &  1.94       & &  $3.1429 \times 10^{-5}$    &  2.23      \\
   & 64   &  $7.5083 \times 10^{-6}$   &  2.01       & &  $7.4072 \times 10^{-6}$    &  2.09       \\
   & 128  &  $1.8266 \times 10^{-6}$   &  2.04       & &  $1.7679 \times 10^{-6}$    &  2.07      \\
     \midrule
   & 16   &  $1.0040 \times 10^{-4}$   &  *          & &  $1.7507 \times 10^{-4}$    &  *            \\
$\sigma=3.0$
   & 32   &  $2.7010 \times 10^{-5}$   &  1.89       & &  $3.8881 \times 10^{-5}$    &  2.17      \\
   & 64   &  $6.8769 \times 10^{-6}$   &  1.97       & &  $9.3905 \times 10^{-6}$    &  2.05       \\
   & 128  &  $1.7312 \times 10^{-6}$   &  1.99       & &  $2.3169 \times 10^{-6}$    &  2.02      \\
     \bottomrule
     \end{tabular}
   \end{table}

\section{Concluding remarks}\label{sec6}

In this work, we considered and analyzed the numerical solutions of problem \eqref{eq1.1}-\eqref{eq1.4}. First, we constructed a spatial semi-discrete difference scheme and performed the long-time stability and convergence analysis. Then we formulated the fully discrete difference scheme and proved the long-time stability, convergence, existence, and uniqueness of numerical solutions. In our future work, we will further consider the temporal second-order finite difference scheme for solving the damping viscoelastic Euler-Bernoulli equations.

%\begin{acknowledgements}
%\end{acknowledgements}

\vskip 3mm
\noindent \scriptsize \textbf{Funding} This work was partially supported by the National Natural Science Foundation of China (No. 12301555), the Taishan Scholars Program of Shandong Province (No. tsqn202306083), and the National Key R\&D Program of China (No. 2023YFA1008903), and the Postdoctoral Fellowship Program of CPSF (No. GZC20240938).

\vskip 3mm
\noindent \scriptsize \textbf{Data Availability} The datasets are available from the corresponding author upon reasonable request.

\section*{Declarations}
\noindent \scriptsize \textbf{Conflict of interest} The authors declare that they have no conflict of interest.

%%%%%%%%%%%%%%%%%%%%%%


\begin{thebibliography}{10}




   % \bibitem {Alabau-Boussouira} {\sc F. Alabau-Boussouira, P. Cannarsa, D. Sforza}, {\em Decay estimates for second order evolution equations with memory}, J. Funct. Anal., 254 (2008), pp.~1342--1372.

    \bibitem {Allegretto} {\sc W. Allegretto, Y. Lin}, {\em Numerical solutions for a class of differential equations in linear viscoelasticity}, Calcolo, 30 (1993), pp.~69--88.


   % \bibitem {Appleby} {\sc J. A. Appleby, M. Fabrizio, B. Lazzari, D. W. Reynolds}, {\em On exponential asymptotic stability in linear viscoelasticity}, Math. Models Methods Appl. Sci., 16 (2006), pp.~1677--1694.

  \bibitem {Araujo} {\sc G. M. Ara\'{u}jo, M. A. F. Ara\'{u}jo, D. C. Pereira}, {\em On a variational inequality for a plate equation with p-Laplacian end memory terms}, Appl. Anal., 101 (2022), pp.~970--983.

    \bibitem {Baker} {\sc K. Baker, L. Banjai, M. Ptashnyk}, {\em Numerical analysis of a time-stepping method for the Westervelt equation with time-fractional damping}, Math. Comp., 93 (2024), pp.~2711--2743.

   % \bibitem {Berrimi} {\sc S. Berrimi, S. A. Messaoudi}, {\em Existence and decay of solutions of a viscoelastic equation with a nonlinear source}, Nonlinear Anal., 64 (2006), pp.~2314--2331.

\bibitem{Bru}  {\sc H. Brunner}, {\em On Volterra integral operators with highly oscillatory kernels}, Discrete  Contin. Dyn. Syst., 34 (2014), pp. 915--929.

    \bibitem {Cavalcanti1} {\sc M. M. Cavalcanti, V. N. Domingos Cavalcanti, T. F. Ma}, {\em  Exponential decay of the viscoelastic Euler-Bernoulli equation with a nonlocal dissipation in general domains}, Differ. Integral Equ., 17 (2004), pp.~495--510.


     \bibitem {Cannarsa1} {\sc  P. Cannarsa, D. Sforza}, {\em Integro-differential equations of hyperbolic type with positive definite kernels}, J. Differential Equ., 250 (2011), pp.~4289--4335.

     \bibitem {Cannarsa} {\sc P. Cannarsa, D. Sforza}, {\em A stability result for a class of nonlinear integro-differential equations with $L^1$ kernels}, Appl. Math., 35 (2008), pp.~395--430.

  \bibitem {Cannon} {\sc J. R. Cannon, Y. Lin, C. Xie}, {\em Galerkin Methods and $L^2$-error estimates for hyperbolic integro-differential equations}, Calcolo, 26 (1989), pp.~197--207.

    \bibitem {Christensen} {\sc  R. M. Christensen}, {\em Theory of Viscoelasticity: An Introduction}, Academic Press, 2003.


  \bibitem {Conti} {\sc  M. Conti, P. G. Geredeli}, {\em  Existence of smooth global attractors for nonlinear viscoelastic equations with memory}, J. Evolution Equ., 15 (2015), pp.~533--558.

\bibitem{DenLiTia} {\sc W. Deng, B. Li, W. Tian, P. Zhang}, {\em Boundary problems for the fractional and tempered fractional operators}, Multiscale Model. Simul., 16 (2018), pp. 125--149.

\bibitem{Emm} {\sc E. Emmrich, M. Thalhammer}, {\em A class of integro-differential equations incorporating nonlinear and nonlocal damping with applications in nonlinear elastodynamics: Existence via time discretization}, Nonlinearity, 24 (2011), pp.~2523--2546.

    \bibitem {Fairweather} {\sc G. Fairweather}, {\em Spline collocation methods for a class of hyperbolic partial integro-differential equations}, SIAM J. Numer. Anal., 31 (1994), pp.~444--460.

     \bibitem {Findley} {\sc  W. N. Findley, F. A. Davis}, {\em Creep and Relaxation of Nonlinear Viscoelastic Materials}, Courier Corporation, 2013.

   % \bibitem {Georgiev} {\sc  V. Georgiev, B. Rubino, R. Sampalmieri}, {\em Global existence for elastic waves with memory}, Arch. Rational Mech. Anal., 176 (2005), pp.~303--330.

   \bibitem {Hu} {\sc  X. Hu, L. Zhang}, {\em A compact finite difference scheme for the fourth-order fractional diffusion-wave system}, Comput. Phys. Commun., 182 (2011), pp.~1645--1650.

  \bibitem {Jorge} {\sc M. Jorge Silva, T. F. Ma}, {\em On a viscoelastic plate equation with history setting and perturbation of p-Laplacian type}, IMA J. Appl. Math., 78 (2013), pp.~1130--1146.

  \bibitem {Karaa} {\sc S. Karaa,  A. K. Pani, S. Yadav}, {\em A priori hp-estimates for discontinuous Galerkin approximations to linear hyperbolic integro-differential equations}, Appl. Numer. Math., 96 (2015), pp.~1--23.

  %  \bibitem {Lagnese} {\sc J. E. Lagnese}, {\em Boundary Stabilization of Thin Plates}, SIAM, 1989.

  \bibitem {Larsson} {\sc S. Larsson, F. Saedpanah}, {\em The continuous Galerkin method for an integro-differential equation modeling dynamic fractional order viscoelasticity}, IMA J. Numer. Anal., 30 (2010), pp.~964--986.

\bibitem{LiDenZha} {\sc C. Li, W. Deng, L. Zhao}, {\em Well-posedness and numerical algorithm for the tempered fractional differential equations},
Discrete Contin. Dyn. Syst. Ser. B, 24 (2019), pp. 1989--2015.

    \bibitem {Lopez} {\sc  J. C. Lopez-Marcos},  {\em A difference scheme for a nonlinear partial integrodifferential equation}, SIAM J. Numer. Anal., 27 (1990), pp.~20--31.

%   \bibitem {McLean} {\sc W. McLean, V. Thom\'{e}e}, {\em Numerical solution of an evolution equation with a positive-type memory term}, J. Austral. Math. Soc. Ser. B, 35 (1993), pp.~23--70.

   \bibitem {McLean1} {\sc W. McLean, V. Thom\'{e}e, L. B. Wahlbin}, {\em Discretization with variable time steps of an evolution equation with a positive-type memory term}, J. Comput. Appl. Math., 69 (1996), pp.~49--69.

 % \bibitem {Narciso} {\sc V. Narciso}, {\em  Long-time behavior of a nonlinear viscoelastic beam equation with past history}, Math. Meth. Appl. Sci, 38 (2015), pp.~775--784.

   \bibitem {Omrani} {\sc K. Omrani, F. Abidi, T. Achouri, N. Khiari}, {\em A new conservative finite difference scheme for the Rosenau equation}, Appl. Math. Comput., 201 (2008), pp.~35--43.

    \bibitem {Ortega} {\sc J. M. Ortega, W. C. Rheinboldt}, {\em Iterative solution of nonlinear equations in several variables}, Academic Press, New York, London, 1970.

    \bibitem {Pani} {\sc A. K. Pani, V. Thomee, L. B. Wahlbin}, {\em Numerical methods for hyperbolic and parabolic integro-differential equations}, J. Integral Eq. Appl., 4 (1993), pp.~533--584.

  %  \bibitem {Pruss1} {\sc J. Pr\"{u}ss}, {\em Evolutionary integral equations and applications}, Monographs in mathematics, Vol. 87. Birkh\"{a}user, Berlin, 1993.

    \bibitem {Qiu} {\sc W. Qiu, H. Chen, X. Zheng}, {\em An implicit difference scheme and algorithm implementation for the one-dimensional time-fractional Burgers equations}, Math. Comput. Simul., 166 (2019), pp.~298--314.

   \bibitem {QiuZhe} {\sc W. Qiu, X. Zheng, K. Mustapha}, {\em Numerical approximations for a hyperbolic integrodifferential equation with a non-positive variable-sign kernel and nonlinear-nonlocal damping}, Appl. Numer. Math., 213 (2025), pp.~61--76.




 %  \bibitem {Qiu2} {\sc W. Qiu, X. Zheng, X. Xiao, H. Wang}, {\em Numerical approximation and analysis for partial integro-differential equation of hyperbolic type}, J. Integral Eq. Appl., 2024.


  %   \bibitem {Renardy} {\sc M. Renardy, W. Hrusa, J. A. Nohel}, {\em Mathematical Problems in Viscoelasticity}, Longman Scientific \& Technical, 1987.

    \bibitem {Sabzikar} {\sc F. Sabzikar, M. M. Meerschaert, J. Chen}, {\em Tempered fractional calculus}, J. Comput. Phys., 293 (2015), pp.~14-28.


   \bibitem {Saedpanah} {\sc F. Saedpanah}, {\em Continuous Galerkin finite element methods for hyperbolic integro-differential equations}, IMA J. Numer. Anal., 35 (2015), pp.~885--908.

    \bibitem {Tan} {\sc Z. Tan, K. Li, Y. Chen}, {\em A fully discrete two-grid finite element method for nonlinear hyperbolic integro-differential equation}, Appl. Math. Comput., 413 (2022), p.~126596.

   \bibitem {Timoshenko} {\sc  S. P. Timoshenko, J. M. Gere}, {\em  Theory of Elastic Stability}, Courier Corporation, 2012.


    \bibitem {Xu2} {\sc D. Xu}, {\em Decay properties for the numerical solutions of a partial differential equation with memory}, J. Sci. Comput., 62 (2015), pp.~146--178.


   \bibitem {XuD4} {\sc D. Xu}, {\em Application of the Crank–Nicolson time integrator to viscoelastic wave equations with boundary feedback damping}, IMA J. Numer. Anal., 42 (2022), pp.~487--514.

    \bibitem {Xu3} {\sc D. Xu, W. Qiu, J. Guo}, {\em A compact finite difference scheme for the fourth-order time-fractional integro-differential equation with a weakly singular kernel}, Numer. Methods Part. Differ. Equ., 36 (2020), pp.~439--458.

  \bibitem {Yang} {\sc Z. Yang}, {\em Existence and energy decay of solutions for the Euler–Bernoulli viscoelastic equation with a delay}, Z. Angew. Math. Phys., 66 (2015), pp.~727--745.

  \bibitem {Yanik} {\sc E. G. Yanik, G. Fairweather}, {\em Finite element methods for parabolic and hyperbolic partial integro-differential equations}, Nonlinear Anal., 12 (1988), pp.~785--809.

   \bibitem {Zhang} {\sc Y. Zhang, Z. Sun, H. Wu}, {\em Error estimates of Crank–Nicolson-type difference schemes for the subdiffusion equation}, SIAM J. Numer. Anal., 49 (2011), pp.~2302--2322.

   \bibitem {ZH} {\sc L. Zhao, C. Huang}, {\em The generalized quadrature method for a class of highly oscillatory Volterra integral equations}, Numer. Algorithms, 92 (2023), pp.~1503--1516.


   \bibitem {Zhao} {\sc M. Zhao, W. Qiu}, {\em Finite element approximation and analysis of damped viscoelastic hyperbolic integrodifferential equations with $L^1$ kernel}, Appl. Math. Lett., 151 (2024), p.~108993.



\end{thebibliography}
\end{document}